\newtheorem{thm}{Theorem}[section]
\newtheorem{prop}[thm]{Proposition}
\DeclareRobustCommand*\cal{\@fontswitch\relax\mathcal}
\newcommand{\R}{
	\ensuremath{\mathbb R}}
\begin{document}

\title[A 2D model for hydrodynamics and biology coupling.]{A 2D model for hydrodynamics and biology coupling applied to \\ algae growth simulations.}

\author[O. Bernard]{Olivier Bernard}
\address{BioCore, INRIA, BP93, F-06902 Sophia-Antipolis Cedex}
\email{Olivier.Bernard@inria.fr}

\author[A.-C. Boulanger]{Anne-Céline Boulanger}
\address{UPMC Univ Paris 06, CNRS UMR 7598, Laboratoire Jacques-Louis Lions, F-75005, Paris;INRIA Rocquencourt, B.P.~105,F-78153 Le Chesnay Cedex}
\email{Anne-Celine.Boulanger@inria.fr}

\author[M.-O. Bristeau]{Marie-Odile Bristeau}
\address{INRIA Rocquencourt, B.P.~105,F-78153 Le Chesnay Cedex}
\email{Marie-Odile.Bristeau@inria.fr}

\author[J. Sainte-Marie]{Jacques Sainte-Marie}
\address{CETMEF, 2 boulevard Gambetta, F-60200 Compi\`egne}
\email{Jacques.Sainte-Marie@inria.fr}

\subjclass[2010]{35Q35, 35Q92, 76D05, 76Z99}
\keywords{Hydrostatic Navier-Stokes equations, Saint-Venant equations, Free surface stratified flows, Multilayer system,  Kinetic scheme, Droop model, Raceway, Hydrodynamics and biology coupling, Algae growth.}

\begin{abstract}

Cultivating oleaginous microalgae in specific culturing devices such as raceways is seen as a future way to produce biofuel. The complexity of this process coupling non linear biological activity to hydrodynamics makes the optimization problem very delicate. The large amount of parameters to be taken into account paves the way for a useful mathematical modeling.
Due to the heterogeneity of raceways along the depth dimension regarding temperature, light intensity or nutrients availability, we adopt a multilayer approach for hydrodynamics and biology.
For free surface hydrodynamics, we use a multilayer Saint-Venant model that allows mass exchanges, forced by a simplified representation of the paddlewheel. Then, starting from an improved Droop model that includes light effect on algae growth, we derive a similar multilayer system for the biological part. A kinetic interpretation of the whole system results in an efficient numerical scheme.
We show through numerical simulations in two dimensions that our approach is capable of discriminating between situations of mixed water or calm and heterogeneous pond. Moreover, we exhibit that \textit{a posteriori} treatment of our velocity fields can provide lagrangian trajectories  which are of great interest to assess the actual light pattern perceived by the algal cells and therefore understand its impact on the photosynthesis process.


\end{abstract}

\maketitle

\section{Introduction}
\label{introduction}

Recently, biofuel production from microalgae has proved to have a high potential for biofuel production \cite{Chisti2007,Wijffels2010}. Several studies have demonstrated that some microalgae species could store more than 50\% of their dry weight in lipids under certain conditions of nitrogen deprivency \cite{Chisti2007,RZBPBBT09,WilliamsLaurens2010} leading to productivities in a range of order larger than terrestrial plants. In this article, we focus on microalgae cultivation in raceways (also called high rate ponds), whose hydrodynamics has been less studied than the photobioreactor culturing devices \cite{Perner2002b,Luo2004,PruPotLeg06,Perner2007a,Sastre07}. These annular shaped ponds of low depth (10 to 50 cm) are mixed with a paddlewheel (see Fig.\ref{raceway3D}).  Due to their inherent nonlinear and instationary properties, where hydrodynamics and biology are strongly coupled, managing and optimizing such processes is very tricky. Carrying out experiments on raceways is both expensive and time consuming. A model is thus a key tool to help in the optimal design of the process but also in its operation. The objective of this paper is to propose a new model describing the coupling between hydrodynamics and biology within a raceway.

The first dynamic model of a microalgal raceway pond was proposed by \cite{Sukenik1987} assuming spatial homogeneity. The model was later consolidated by including time-discrete photoacclimation dynamics \cite{Sukenik1991}. In parallel, other less elaborated models where proposed \cite{Grobbelaar1990,Guterman1990}. Latter on, the coupling of biology with hydrodynamics in raceways was studied \cite{Huggins2004, Huggins2005}, in order to optimize the raceway design. In \cite{James2010}, algae growth and transport is modelled and several tests are performed in order to study for instance the effect of the water height or temperature control on the algae concentration evolution. However, those studies might not guarantee some key properties such as mass balance. We claim that the model we develop satisfies crucial mathematical properties such as the conservation of biological variables or positivity of the water height.

Our approach is the following. For the hydrodynamical part, a multilayer Saint-Venant system with variable density introduced in \cite{Audusse2010a} is applied. We use this model because we do not want to tackle the whole Navier-Stokes system for such shallow water situations, but we want to represent heterogeneity throughout the water column. This model has now been widely validated \cite{Audusse2010a, Audusse2010b, Sainte-Marie2010}. We add for our problem a specific forcing mimicking the effect of a paddlewheel. For the growth of microalgae, we utilize an improved version of the Droop model \cite{Droop1968,Droop1983,Bernard2011}. The Droop model has been widely studied and validated \cite{SciRam94,Lange1993,Bernard1995,Bernard2002}. It states that growth do not depend on the external nutrient concentration but on the internal cell quota of nutrients. The algae can indeed still grow a few days after exhaustion of the substrate thanks to their capacity to store nutrients. The enhanced Droop model \cite{Bernard2011} also takes into account the effect of light on phytoplankton. We then write the multi-layer version of the biological model, inspired from \cite{Audusse2010a}. Afterwards, we give a kinetic interpretation of the whole system, allowing the derivation of a numerical scheme that has requested properties.

The outline is organized as follows. In section \ref{secmodel} we describe and justify our system of partial differential equations. Afterwards, we derive the numerical scheme that we will use, based on a kinetic interpretation. We basically follow \cite{Audusse2010a} and add the new state variables concerning biology. In section \ref{secsim} we explain how we model the raceway with the 2D code (the two dimensions being (x,z), respectively the length of the pool and the water depth) by adding periodic conditions. We also focus on the way agitation is introduced: we add a force mimicking a paddle-wheel and then we derive its contribution in the multilayer system, in the kinetic scheme and in the discrete scheme. We show that we have relevant results. Eventually, a last section deals with a Lagrangian approach of the algae tracking that could be useful regarding the elaboration of a better environment modeling for the algae. \\

\section{The coupled model}
\label{secmodel}
We adopt and couple two continuous models, one for the representation of free surface hydrodynamics and the other for microalgae growth. We point out that it is a one way coupling: the biology is indeed advected and diffused by the water flow, but there is no retroaction of algae on the fluid. This is justified by the fact that the biological concentrations, even though greater in a raceway than they could be in an ocean or a lake, remain still much smaller than what is expected to change density or temperature of the pool (let us recall for instance that the salinity of the seawater is around 37 $g.L^{-1}$ whereas we will never reach algae concentrations more than 1 or 2 $g.L^{-1})$.

\subsection{The hydrodynamics model}

Let us introduce first the hydrodynamics model in two dimensions. It will represent a free surface flow set into motion by a paddlewheel. The paddlewheel applies a volumic force on the water: 
$$F_{wheel}(x,z,t) = F_{x}(x,z,t) \mathbf{\overrightarrow{e_{x}}} + F_{z}(x,z,t) \mathbf{\overrightarrow{e_{z}}}.$$
More details are given in section \ref{secsim} about the formulation of the force. 
We begin with the 2D hydrostatic Navier-Stokes equations with varying density, on which we plug the paddlewheel force:
\begin{align}
&\frac{\partial \rho}{\partial t} +\frac{\partial \rho u}{\partial x}+\frac{\partial \rho w}{\partial z} = 0 \label{eq:NS_2d1},\\
&\frac{\partial \rho u}{\partial t} + \frac{\partial \rho u^2}{\partial x} + \frac{\partial \rho uw}{\partial z} + \frac{\partial p}{\partial x} = \frac{\partial \Sigma_{xx}}{\partial x} + \frac{\partial \Sigma_{xz}}{\partial z} + F_{x}(x,z,t) \label{eq:NS_2d2},\\
&\frac{\partial p}{\partial z} = -\rho g + \frac{\partial \Sigma_{zx}}{\partial x} + \frac{\partial \Sigma_{zz}}{\partial z} + F_{z}(x,z,t) ,\label{eq:NS_2d3}
\end{align}
and we consider solutions of the equations  for $t>t_0, \quad x \in \R, \quad z_b(x) \leq z \leq \eta(x,t),$
where $\eta(x,t)$ represents the free surface elevation, ${\bf u}=(u,w)^T$ the velocity
vector, $p(x,z,t)$ is the pressure, $g$ the gravity acceleration and $\rho(T)$ is the water density, depending on an advected and diffused tracer $T$ basically representing the temperature and which satisfies the advection diffusion equation:

\begin{equation}
\label{tracer}
\frac{\partial \rho T}{\partial t} + \frac{\partial \rho uT}{\partial x} + \frac{\partial \rho wT}{\partial z} = \mu_{T} \frac{\partial^{2}T}{\partial x^{2}} + \mu_{T} \frac{\partial^{2}T}{\partial z^{2}}
\end{equation}

The flow height is $H = \eta - z_b$. The chosen form of the viscosity tensor is
\begin{equation*}
\Sigma_{xx} = 2 \mu \frac{\partial u}{\partial x},  \quad  \Sigma_{xz} = \mu  \frac{\partial u}{\partial z} , \quad \Sigma_{zz} = 2 \mu \frac{\partial w}{\partial z},  \quad  \Sigma_{zx} = \mu  \frac{\partial u}{\partial z} 
\label{eq:visco}
\end{equation*}
where $\mu$ is a dynamic  viscosity.

\paragraph{Boundary conditions}

The system (\ref{eq:NS_2d1})-(\ref{eq:NS_2d3}) is completed with boundary conditions. The
outward and upward unit normals to the free surface ${\bf n}_s$ and to the bottom ${\bf n}_b$ are
given by
$${\bf n}_s = \frac{1}{\sqrt{1 + \bigl(\frac{\partial \eta}{\partial x}\bigr)^2}}  \left(\begin{array}{c} -\frac{\partial \eta}{\partial x}\\ 1 \end{array} \right), \quad {\bf n}_b = \frac{1}{\sqrt{1 + \bigl(\frac{\partial z_b}{\partial x}\bigr)^2}}  \left(\begin{array}{c} -\frac{\partial z_b}{\partial x}\\ 1 \end{array} \right).$$
Let
$\Sigma_T$ be the total stress tensor with
$$\Sigma_T = - p I_d + \left(\begin{array}{cc} \Sigma_{xx} & \Sigma_{xz}\\ \Sigma_{zx} & \Sigma_{zz}\end{array}\right).$$

\paragraph{Free surface conditions}

At the free surface we have the kinematic boundary condition
\begin{equation}
\frac{\partial \eta}{\partial t} + u_s \frac{\partial \eta}{\partial x}
-w_s = 0,
\label{eq:free_surf} 
\end{equation}
where the subscript $s$ denotes the value of the considered quantity at the free surface. 
Provided that the air viscosity is negligible, the continuity of stresses at the free boundary implies\begin{equation}
\Sigma_T {\bf n}_s = -p^a {\bf n}_s, 
\label{eq:BC_h}
\end{equation}
where $p^a=p^a(x,t)$ is a given function corresponding to the atmospheric pressure. In the following, we assume $p^a=0.$

\paragraph{Bottom conditions}

The kinematic boundary condition is a classical no-penetration condition
\begin{equation}
{\bf u}_b. {\bf n}_b = 0,\quad \mbox{or}\quad u_b \frac{\partial z_b}{\partial x} - w_b = 0.
\label{eq:bottom} 
\end{equation}
For the bottom stresses we consider a wall law
\begin{equation}
{\bf t}_b. \Sigma_T {\bf n}_b = \kappa {\bf u}_b . {\bf t}_b,
\label{eq:BC_z_b}
\end{equation}
where ${\bf t}_b$ a unit vector satisfying ${\bf t}_b \cdot {\bf n}_b = 0$ and $\kappa$ is a friction coefficient.

\subsection{The biological model}

\subsubsection{Phytoplankton}
Microalgae have pigments to capture sunlight, which is turned into chemical energy during the photosynthesis process. They consume carbon dioxide, and release oxygen. Phytoplankton growth depends on the availability of carbon dioxide, sunlight, and nutrients. Required nutrients are of various types, but here we focus on inorganic nitrogen, such as nitrate, whose deprivency is known to stimulate lipid production \cite{Metting1996}. We consider the combined  influence of nitrate and light on phytoplankton growth. The nutrient limitation is taken into account by a Droop formulation \cite{Droop1968} of the growth rate. The light effect (photosynthesis and photoinhibition) is represented using a classical formulation from  \cite{Peeters1978} embedded in the model proposed by \cite{Bernard2011}.

\subsubsection{Droop model with photoadaptation}
The Droop model represents the growth of an algal biomass $C^{1}$ using a nutrient of concentration $C^{3}$ (nitrate under the form $NO_{3}$) in the medium. The concentration of particulate nitrogen (nitrogen contained in the algal biomass) is denoted $C^{2}$. Note that $C^{1}$ ($gC.m^{-3}$), $C^{2}$ ($gN.m^{-3}$)  and  $C^{3}$ ($gN.m^{-3}$) are three transportable quantities.
In order to reproduce the coupling between nutrient uptake rate $\lambda$ and growth rate  $\mu$, Droop introduced the cell quota,  $q(t,x)$, defined as the amount of internal nutrients per biomass unit: $q=\frac{C^{2}}{C^{1}}$. 
Microalgae growth rate thus depends  on the intra-cellular quota:
\begin{equation}
\mu(q) = \bar \mu (1-\frac{Q_0}{q}), 
\end{equation}
where the constant $\bar \mu$ denotes the hypothetical growth rate for infinite quota, and $Q_{0}$ is the minimum internal nutrient quota required for growth.

The nitrate uptake rate is a function of the external nitrate \cite{Dugdale67}:
\begin{equation}
\lambda(C^{3}) = \bar \lambda\frac{C^{3}}{C^{3}+K_3}, 
\end{equation}
where $K_{3}$ is the half saturation constant and $\bar \lambda$ the maximum uptake rate.

Finally, we will take into account both respiration and mortality, which are represented by a constant loss of the biomass with a factor $R$.

In line with \cite{Bernard2011}, we modify this classical model in order to capture light and space variations.

\paragraph{Light along the raceway depth.}
The previous model only included nutrient limited growth. It can be improved by introducing a new data in the system: the light intensity, which will depend on the quantity of water and biomass above the algae. Light is indeed attenuated by the chlorophyll concentration. This concentration can be linked to nitrogen through
$$Chl = \gamma(I^*)C^{2}$$
with $$ \gamma(I^*) = \frac{k_{I^{*}}}{I^{*}+k_{I^{*}}},$$ 
where $k_{I^{*}}$ is a constant,  $I^*$ is the average light in the water column the day before(space and time average). $\gamma(I^*)$ is presumed constant over the day.

Let us assume that light intensity hitting the water surface is of the form
\begin{equation}
I_0 = I_0^{max}\max(0,\sin(2\pi t)),
\end{equation}
Then the intensity at depth $z$ can be described by
\begin{equation}
I(z) = I_0e^{-\psi(C^{2},I^*,z)},
\end{equation}
where
\begin{align}
\psi(C^{2},I^*,z) &= \int_0^z{(a\gamma(I^*)C^{2}(z)+b)dz},\\
\end{align}
and $I_{0}^{max}$ is a constant representing the maximum light intensity, $a$ and $b$ are also given constants.

The growth rate can then be computed to take into account light intensity, using Peeters and Eilers formalism \cite{Peeters1978,Han01}
$$\mu(q,I) =\tilde{\mu}\frac{I}{I + K_{sI} + \frac{I^2}{K_{iI}}}(1-\frac{Q_0}{q}),$$
with $\tilde{\mu}$, $K_{sI}$, $K_{iI}$ three given constants derived from dedicated experiments.
Finally, a  down regulation by the internal quota of the uptake rate must be included to avoid infinite substrate($NO_{3}$) uptake in the dark
$$\lambda(C^{3},q) =\overline{\lambda}\frac{C^{3}}{C^{3}+K_3}(1-\frac{q}{Q_l})$$
where $Q_l$ is the maximum achievable quota.

Adding advection and diffusion, the biological system writes in the end
\begin{align}
&\frac{\partial \rho C^{1}}{\partial t} + \frac{\partial \rho uC^{1}}{\partial x} + \frac{\partial \rho w C^{1}}{\partial z} =
\mu_{C^{1} } \left( \frac{\partial^{2}C^{1}}{\partial x^{2}} + \frac{\partial^{2}C^{1}}{\partial z}\right)+ \rho(\mu(q,I)C^{1} - R C^{1}),\label{eqC1}\\
&\frac{\partial \rho C^{2}}{\partial t} +  \frac{\partial \rho uC^{2}}{\partial x} + \frac{\partial \rho w C^{2}}{\partial z}  =
\mu_{C^{2} } \left( \frac{\partial^{2}C^{2}}{\partial x^{2}} + \frac{\partial^{2}C^{2}}{\partial z}\right) + \rho(\lambda(C^{3},q) C^{1} - R C^{2}),\label{eqC2}\\
&\frac{\partial \rho C^{3}}{\partial t} + \frac{\partial \rho uC^{3}}{\partial x} + \frac{\partial \rho w C^{3}}{\partial z} =
\mu_{C^{3} } \left( \frac{\partial^{2}C^{3}}{\partial x^{2}} + \frac{\partial^{2}C^{3}}{\partial z}\right) - \rho\lambda(C^{3},q) C^{1},\label{eqC3},
\end{align}
where $q = \frac{C^{2}}{C^{1}}$, $\mu_{C^{1} }, \mu_{C^{2} }, \mu_{C^{3} }$ are diffusion coefficients and ($u$, $w$) are the fluid velocities along $x$ and $z$ direction.

\section{Multilayer model, kinetic interpretation and numerical scheme}
\label{secnumscheme}

\subsection{Vertical space discretization: the multilayer model}
The next step is the vertical discretization of (\ref{eq:NS_2d1}-\ref{eq:NS_2d3}) and (\ref{eqC1}-\ref{eqC3}) in order to obtain a multilayer system. Such a derivation has already been performed in \cite{Audusse2010a, Audusse2010b}. But in our case we have a source term representing the paddlewheel effect in equations (\ref{eq:NS_2d2}) and (\ref{eq:NS_2d3}). Since the resulting vertically discretized equations are not straightforward from those two previous papers, we detail it here. However, for the sake of simplicity, we choose to omit the viscosity terms. Moreover, we will use the classical equation of state relating the density and the tracer identified here with the temperature
\begin{equation}
\rho(T) = \rho_{0}\left( 1 - \alpha(T-T_{0})^{2}\right),
\end{equation}
with $T_{0} = 4^\circ$ C, $\alpha = 6.63 10^{-6} $C$^{2}$ and $\rho_{0} = 10^{3}$ kg.m$^{-3}$.
In the range of temperatures that we plan to use, it is easy to check that density variations are small. Hence we use the Boussinesq assumption which states that those density variations are taken into account in the gravitational force only. In any other equation the density is supposed to be $\rho_{0}$. Eventually, we end up with the hydrodynamic system
\begin{align}
&\frac{\partial  u}{\partial x}+\frac{\partial  w}{\partial z} = 0 \label{eq:NS_2d1Boussi},\\
&\frac{\partial  u}{\partial t} + \frac{\partial  u^2}{\partial x} + \frac{\partial  uw}{\partial z} +\frac{1}{\rho_{0}} \frac{\partial p}{\partial x} = \frac{1}{\rho_{0}} F_{x}(x,z,t)\label{eq:NS_2d2Boussi},\\
&\frac{\partial p}{\partial z} = -\rho g + F_{z}(x,z,t)\label{eq:NS_2d3Boussi},\\
&\frac{\partial T}{\partial t} + \frac{\partial  uT}{\partial x} + \frac{\partial  wT}{\partial z} = 0 \label{eq:NS_2d4Boussi}
\end{align}
and the biological system
\begin{align}
&\frac{\partial C^{1}}{\partial t} + \frac{\partial uC^{1}}{\partial x} + \frac{\partial  w C^{1}}{\partial z} =
\mu(q,I)C^{1} - R C^{1},\label{eqC1Boussi}\\
&\frac{\partial C^{2}}{\partial t} +  \frac{\partial uC^{2}}{\partial x} + \frac{\partial w C^{2}}{\partial z}  =
\lambda(C^{3},q) C^{1} - R C^{2},\label{eqC2Boussi}\\
&\frac{\partial C^{3}}{\partial t} + \frac{\partial uC^{3}}{\partial x} + \frac{\partial w C^{3}}{\partial z} =
 -\lambda(C^{3},q) C^{1},\label{eqC3Boussi}
\end{align}
with $q = \frac{C^{2}}{C^{1}}$.

The process to obtain the multilayer system is described below. It is basically a Galerkin approximation of the variables followed by a vertical integration of the equations. The interval $[z_b,\eta]$ is divided into $N$ layers $\{L_\alpha\}_{\alpha\in\{1,\ldots,N\}}$ of thickness $l_\alpha H(x,t)$ where each layer $L_\alpha$ corresponds to the points satisfying $z \in L_\alpha(x,t) = [z_{\alpha-1/2},z_{\alpha+1/2}]$ with
\begin{equation}
\left\{\begin{array}{l}
z_{\alpha+1/2}(x,t) = z_b(x,t) + \sum_{j=1}^\alpha l_j H(x,t),\\
h_\alpha(x,t) = z_{\alpha+1/2}(x,t) - z_{\alpha-1/2}(x,t)=l_\alpha H(x,t),
 \quad \alpha\in [0,\ldots,N]
\end{array}\right.
\label{eq:layer}
\end{equation}
 with $l_j>0, \quad \sum_{j=1}^N l_j=1$. 

Now let us consider the space $\Bbb{P}_{0,H}^{N,t}$ of piecewise constant functions defined by
$$\Bbb{P}_{0,H}^{N,t} = \left\{ \Bbb{I}_{z \in L_\alpha(x,t)}(z),\quad \alpha\in\{1,\ldots,N\}\right\},$$
where $\Bbb{I}_{z \in L_\alpha(x,t)}(z)$ is the characteristic function of the interval $L_\alpha(x,t)$. Using this formalism, the projection of $u$, $w$ and $T$ onto $\Bbb{P}_{0,H}^{N,t}$ is a piecewise constant function defined by
\begin{equation}
X^{N}(x,z,\{z_\alpha\},t)  = \sum_{\alpha=1}^N
\Bbb{I}_{[z_{\alpha-1/2},z_{\alpha+1/2}]}(z)X_\alpha(x,t),
\label{eq:ulayer}
\end{equation}
for $X \in (u,w,T)$. The density $\rho=\rho(T)$ inherits a discretization from the previous relation with
\begin{equation}
\rho^{N}(x,z,\{z_\alpha\},t)  = \sum_{\alpha=1}^N
\Bbb{I}_{[z_{\alpha-1/2},z_{\alpha+1/2}]}(z)\rho(T_\alpha(x,t)).
\label{eq:rholayer}
\end{equation}

We have the following result.
\begin{prop}
The weak formulation of Eqs.~(\ref{eq:NS_2d1Boussi})-(\ref{eq:NS_2d4Boussi}) on $\Bbb{P}_{0,H}^{N,t}$ leads to a system of the form

\begin{alignat}{1}
\sum_{\alpha=1}^N \frac{\partial l_\alpha H  }{\partial t }   + 
\sum_{\alpha=1}^N \frac{\partial l_\alpha H u_\alpha }{\partial x } &= 0,
\label{eq:H}\\
\frac{\partial  h_\alpha u_\alpha}{\partial t } +
\frac{\partial }{\partial x }\left(h_\alpha u^2_\alpha +
h_\alpha p_\alpha\right) &= u_{\alpha+1/2}G_{\alpha+1/2} - u_{\alpha-1/2}G_{\alpha-1/2}\notag \\
                                            & +\frac{1}{\rho_{0}}\left(\frac{\partial z_{\alpha+1/2}}{\partial x} p_{\alpha+1/2} - \frac{\partial z_{\alpha-1/2}}{\partial x} p_{\alpha-1/2}\notag \right) \\
                                            &+\frac{1}{\rho_{0}} \int_{z_{\alpha-1/2}}^{z_{\alpha+1/2}}{F_x(x,z,t)dz},
\label{eq:eq4}\\
\frac{\partial h_{\alpha}T_{\alpha}}{\partial t} + \frac{\partial}{\partial x}\left(h_{\alpha}u_{\alpha}T_{\alpha}\right) &= T_{\alpha+1/2}G_{\alpha+1/2} - T_{\alpha-1/2}G_{\alpha-1/2},\\
&\alpha\in [1,\ldots,N].\nonumber
\end{alignat}

\label{prop:mc_simple}
with 
\begin{align}
&G_{\alpha+1/2} = \frac{\partial z_{\alpha+1/2}}{\partial t} + u_{\alpha+1/2}\frac{\partial z_{\alpha+1/2}}{\partial x}-w_{\alpha+1/2} \label{eq:massexchange}\\
&G_{1/2} = G_{N+1/2} = 0.\label{eq:massexchangelim}
\end{align}
The definitions of $p_\alpha$, $p_{\alpha+1/2}$, $u_{\alpha+1/2}$, $T_{\alpha+1/2}$ are given in the following proof. 
\end{prop}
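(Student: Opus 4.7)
The plan is to follow the Galerkin-type approach of \cite{Audusse2010a} and derive the multilayer system by integrating the Boussinesq system (\ref{eq:NS_2d1Boussi})--(\ref{eq:NS_2d4Boussi}) against the characteristic functions $\Bbb{I}_{z \in L_\alpha(x,t)}$ over each slab $L_\alpha(x,t) = [z_{\alpha-1/2},z_{\alpha+1/2}]$. Because the slab boundaries $z_{\alpha\pm 1/2}$ depend on $x$ and $t$, I will systematically apply the Leibniz rule
\begin{equation*}
\int_{z_{\alpha-1/2}}^{z_{\alpha+1/2}} \partial_s f \, dz = \partial_s \int_{z_{\alpha-1/2}}^{z_{\alpha+1/2}} f \, dz - f_{\alpha+1/2}\, \partial_s z_{\alpha+1/2} + f_{\alpha-1/2}\, \partial_s z_{\alpha-1/2}
\end{equation*}
for $s \in \{t,x\}$ to convert integrals of derivatives into derivatives of layer-integrated quantities plus interfacial contributions. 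The novelty compared to \cite{Audusse2010a,Audusse2010b} is the presence of $F_x$ and $F_z$, whose effects propagate respectively into the horizontal momentum balance and into the layer pressures obtained from vertical integration of (\ref{eq:NS_2d3Boussi}).

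First I treat continuity. Integrating (\ref{eq:NS_2d1Boussi}) over $L_\alpha$ and applying the Leibniz rule in $x$ yields
\begin{equation*}
\frac{\partial (h_\alpha u_\alpha)}{\partial x} + w_{\alpha+1/2} - w_{\alpha-1/2} - u_{\alpha+1/2}\frac{\partial z_{\alpha+1/2}}{\partial x} + u_{\alpha-1/2}\frac{\partial z_{\alpha-1/2}}{\partial x} = 0.
\end{equation*}
Adding $\partial_t h_\alpha$ and rearranging by means of the definition (\ref{eq:massexchange}) of $G_{\alpha\pm 1/2}$ produces the per-layer mass balance $\partial_t h_\alpha + \partial_x(h_\alpha u_\alpha) = G_{\alpha+1/2} - G_{\alpha-1/2}$. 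The kinematic conditions (\ref{eq:free_surf}) at the free surface and (\ref{eq:bottom}) at the bottom translate into $G_{1/2} = G_{N+1/2} = 0$, as required by (\ref{eq:massexchangelim}), and summing the per-layer balance over $\alpha$ telescopes the mass-exchange terms and produces (\ref{eq:H}).

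Next I handle the horizontal momentum equation (\ref{eq:NS_2d2Boussi}). The advective terms are processed by the same Leibniz manipulation, the products $u_{\alpha\pm 1/2} G_{\alpha\pm 1/2}$ emerging from the combination of the vertical fluxes $u_{\alpha\pm 1/2} w_{\alpha\pm 1/2}$ and the moving-boundary contributions; here $u_{\alpha+1/2}$ is fixed as an upwinded value with respect to the sign of $G_{\alpha+1/2}$, a choice ensuring consistency with the subsequent kinetic scheme. The pressure $p$ is reconstructed from (\ref{eq:NS_2d3Boussi}) by integrating from $z$ up to $\eta$ with $p(\eta)=0$, which now contributes an additional term $-\int_z^\eta F_z\,dz'$ alongside the usual hydrostatic piece $\int_z^\eta \rho g \, dz'$. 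Projecting this reconstruction onto $\Bbb{P}_{0,H}^{N,t}$ defines $p_{\alpha+1/2}$ as the interfacial value and $p_\alpha$ as the corresponding layer average. The pressure-gradient term is then split via
\begin{equation*}
\int_{z_{\alpha-1/2}}^{z_{\alpha+1/2}} \frac{\partial p}{\partial x}\, dz = \frac{\partial (h_\alpha p_\alpha)}{\partial x} - p_{\alpha+1/2}\frac{\partial z_{\alpha+1/2}}{\partial x} + p_{\alpha-1/2}\frac{\partial z_{\alpha-1/2}}{\partial x},
\end{equation*}
and the source term simply integrates to $\int_{z_{\alpha-1/2}}^{z_{\alpha+1/2}} F_x \, dz$. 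Collecting all contributions and dividing by $\rho_0$ where appropriate gives (\ref{eq:eq4}).

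Equation (\ref{eq:NS_2d4Boussi}) for the temperature is treated in exactly the same fashion, yielding interfacial fluxes $T_{\alpha\pm 1/2} G_{\alpha\pm 1/2}$ with $T_{\alpha+1/2}$ defined by upwinding with respect to the sign of $G_{\alpha+1/2}$. The main obstacle is the careful bookkeeping of interfacial values together with the new pressure contribution generated by $F_z$: one must verify that $p_\alpha$ and $p_{\alpha+1/2}$ admit a consistent piecewise-constant reconstruction in the presence of the source, and that the choices of $u_{\alpha+1/2}$ and $T_{\alpha+1/2}$ remain compatible with the energy/entropy structure required by the kinetic interpretation developed later in the section.
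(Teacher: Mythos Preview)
Your proposal is correct and follows essentially the same route as the paper: refer the standard layer-integration to \cite{Audusse2010a,Audusse2010b}, then handle the new force terms by inserting $\int_{z_{\alpha-1/2}}^{z_{\alpha+1/2}} F_x\,dz$ directly into the momentum balance and absorbing $F_z$ into the reconstructed pressure via vertical integration of (\ref{eq:NS_2d3Boussi}). The only thing the paper does more explicitly is to write out the closed formulas for $p_\alpha$, $p_{\alpha+1/2}$, $p_{\alpha-1/2}$ (including the $F_z$ integrals) and the upwind definitions of $u_{\alpha+1/2}$, $T_{\alpha+1/2}$, which the statement promises; you describe these in words, so just record the formulas to complete the proof.
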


\begin{proof}
Since the demonstration for part of this proposal can be found in \cite{Audusse2010a} and \cite{Audusse2010b} , we will not detail it here. However, we will focus on the integration of the agitation terms in this multilayer system.

\paragraph{The horizontal term}
The horizontal term, in the x-projection of the momentum equation will be handled simply: since we plan to use an expression of the force that can be integrated analytically (see \ref{paddle}), we just add the integrated force in the right-hand-side of (\ref{eq:eq4})
\begin{align}
+ \int_{z_{\alpha-1/2}}^{z_{\alpha+1/2}}{F_x(x,z,t)dz},\\
& \alpha\in [1,\ldots,N]. \label{eq:Q}\nonumber
\end{align} 

\paragraph{The vertical term}

The handling of this term will require more steps. First of all, let us recall that in the situation where no additional source terms are present, (\ref{eq:NS_2d3Boussi}) is used to compute the piecewise continuous state variable $p$, which is then introduced in (\ref{eq:NS_2d2Boussi}) (see \cite{Audusse2010a}). We basically proceed the same way.\\

From (\ref{eq:NS_2d3Boussi}), we get:
\begin{equation}
p(x,z,t) = g\int_{z}^{\eta}{\rho(x,z',t)}dz' - \int_{z}^{\eta}{F_{z}(x,z',t)}dz'\label{eq:p_agit}
\end{equation}

If we project it on $\Bbb{P}_{0,H}^{N,t}$, we get for a $z$ in layer $\alpha$
\begin{equation}
p(x,z,t) = g\left( \sum_{j=\alpha +1}^{N}{\rho_j h_j} + \rho_{\alpha}(z_{\alpha + 1/2} - z)\right) - \int_{z}^{\eta}{F_{z}(x,z',t)dz'}.
\label{eq:p_agit_expand}
\end{equation}
For the vertical integration of (\ref{eq:NS_2d2Boussi}) we need to compute:

\begin{equation}
\int_{z_{\alpha -1/2}}^{z_{\alpha +1/2}}{\frac{\partial p}{\partial x} dz} = \frac{\partial h_{\alpha}p_{\alpha}}{\partial x} - \frac{\partial z_{\alpha +1/2}}{\partial x}p_{\alpha + 1/2} + \frac{\partial z_{\alpha -1/2}}{\partial x}p_{\alpha - 1/2}.
\label{dpdx_bis}
\end{equation}

Since 
\begin{equation}
p_{\alpha} =\frac{1}{h_\alpha}\displaystyle  \int_{z_{\alpha-1/2}}^{z_{\alpha+1/2}}p(x,z,t)dz
,\qquad p_{\alpha+1/2} = p(x,z_{\alpha+1/2},t),
\label{eq:palpha_bis}
\end{equation}
we see that the vertical agitation force has an effect in $p$ through (\ref{eq:NS_2d2Boussi}). Let us precise the expressions of $p_{\alpha}(x,t)$, $p_{\alpha + 1/2}(x,t)$ and $p_{\alpha -1/2}(x,t)$.
\begin{align}
p_{\alpha}(x,t) &= \frac{1}{h_\alpha}\displaystyle  \int_{z_{\alpha-1/2}}^{z_{\alpha+1/2}}p(x,z,t)dz  \nonumber \\
			&=g\left( \frac{\rho_{\alpha}h_{\alpha}}{2} + \sum_{j=\alpha +1}^{N}{\rho_j h_j}\right) - \frac{1}{h_{\alpha}}\int_{z_{\alpha -1/2}}^{z_{\alpha +1/2}}{\int_{z}^{\eta}{F_z(x,z',t)dz'}dz}\label{palphadef},\\
p_{\alpha + 1/2}(x,t) &=  g \sum_{j=\alpha +1}^{N}{\rho_j h_j} - \int_{z_{\alpha + 1/2}}^{\eta}{F_z(x,z',t)dz'},\\
p_{\alpha -1/2}(x,t) &= g \sum_{j=\alpha}^{N}{\rho_j h_j} - \int_{z_{\alpha -1/2}}^{\eta}{F_z(x,z',t)dz'}.\end{align}
The velocities $u_{\alpha+1/2}, \, \alpha=1, ..., N-1$ are obtained using an upwinding with respect to the direction of the mass exchange:
\begin{equation}
u_{\alpha+1/2} =  \left\{
    \begin{array}{ll}
        u_{\alpha} & \mbox{if } G_{\alpha+1/2} \ge 0 \\
        u_{\alpha+1} & \mbox{if } G_{\alpha+1/2} < 0.
    \end{array}
\right.
\label{ualphaplus}
\end{equation}
We proceed in the same way for the tracer:
\begin{equation}
T_{\alpha+1/2} =  \left\{
    \begin{array}{ll}
        T_{\alpha} & \mbox{if } G_{\alpha+1/2} \ge 0 \\
        T_{\alpha+1} & \mbox{if } G_{\alpha+1/2} < 0.
    \end{array}
\right.
\label{Talphaplus}
\end{equation}
\end{proof}

In Prop.~\ref{prop:mc_simple} the vertical velocity $w$ no more
appears, but we can derive  relations for the discrete layer values of
this variable by performing the Galerkin approximation of the continuity
equation (\ref{eq:NS_2d1Boussi}) multiplied by $z$. This  leads to
\begin{align}
\frac{\partial}{\partial t }\left(\frac{z_{\alpha+1/2}^2 -
  z_{\alpha-1/2}^2}{2} \right) + \frac{\partial}{\partial
  x}\left( \frac{z_{\alpha+1/2}^2 -  z_{\alpha-1/2}^2}{2}
   u_\alpha\right) &= h_\alpha  w_\alpha + z_{\alpha+1/2}G_{\alpha+1/2} - z_{\alpha-1/2}G_{\alpha-1/2},
\label{eq:w_mc}
\end{align}
where the $w_\alpha$, $\alpha =1,\ldots,N$, are the components of the
Galerkin approximation of $w$ on $\Bbb{P}_{0,H}^{N,t}$, see (\ref{eq:ulayer}). 
Since all the quantities except $w_\alpha$ appearing in Eq.~(\ref{eq:w_mc}) 
are already defined by (\ref{eq:H}), (\ref{eq:eq4}), (\ref{eq:massexchange}), (\ref{eq:massexchangelim}),
relation (\ref{eq:w_mc}) allows obtaining the values  $w_\alpha$
by post-processing.
Note that we use the relation (\ref{eq:w_mc})
rather than the divergence free condition 
for stability purposes.
We refer the reader to \cite{Sainte-Marie2010,Audusse2010a} for more details.

\noindent \begin{prop} Multilayer version of the Droop model with photoacclimation. The weak formulation of  Eqs.~(\ref{eqC1Boussi})-(\ref{eqC3Boussi}) on $\Bbb{P}_{0,H}^{N,t}$ leads to a system of the form
\begin{align}
\frac{\partial h_\alpha C_{ \alpha}^{1}}{\partial t } +
\displaystyle\frac{\partial }{\partial x}\left( h_\alpha
  C_{\alpha}^{1} u_\alpha\right) &= C_{ \alpha+1/2}^{1} G_{\alpha+1/2} -
C_{\alpha-1/2}^{1} G_{\alpha-1/2} \nonumber\\
& + h_\alpha(\mu(q_\alpha, I_\alpha)C_{\alpha}^{1} - R C_{ \alpha}^{1}),\label{eq:eqX}\\
\frac{\partial  h_\alpha C_{ \alpha}^{2}}{\partial t } +
\displaystyle\frac{\partial }{\partial x}\left( h_\alpha
  C_{ \alpha}^{2} u_\alpha\right) &= C_{\alpha+1/2}^{2} G_{\alpha+1/2} -
C_{\alpha-1/2}^{2} G_{\alpha-1/2}\nonumber\\
& +  h_\alpha(\lambda(C_{ \alpha}^{3},q_\alpha) C_{ \alpha}^{1} - R C_{ \alpha}^{2}),\label{eq:eqN}\\
\frac{\partial h_\alpha C_{ \alpha}^{3}}{\partial t } +
\displaystyle\frac{\partial }{\partial x}\left( h_\alpha
  C_{ \alpha}^{3} u_\alpha\right) &= C_{ \alpha+1/2}^{3} G_{\alpha+1/2} -
C_{\alpha-1/2}^{3} G_{\alpha-1/2}\nonumber\\
&  -  h_\alpha\lambda(C_{ \alpha}^{3},q_\alpha) C_{ \alpha}^{1} ,\label{eq:eqS}\\
&\hspace{5cm} \alpha\in [1,\ldots,N],\nonumber
\end{align}
with $q_{\alpha} = \frac{C_{\alpha}^{2}}{C_{\alpha}^{1}}$ and $C_{\alpha+1/2}^{j}$, $j=1\ldots 3$ defined through the same upwinding as $u_{\alpha+1/2}$ and $T_{\alpha+1/2}$ (see \ref{ualphaplus}, \ref{Talphaplus}).
\label{prop:mldroop}
\end{prop}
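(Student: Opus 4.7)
The plan is to mimic the derivation of Proposition~\ref{prop:mc_simple}, since the left-hand sides of (\ref{eqC1Boussi})--(\ref{eqC3Boussi}) share exactly the transport structure of the temperature equation (\ref{eq:NS_2d4Boussi}), and only the source terms on the right are new.

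First I would substitute the piecewise constant Galerkin ansatz (\ref{eq:ulayer}) for $C^1,C^2,C^3$ into each PDE and test against the characteristic function $\Bbb{I}_{[z_{\alpha-1/2},z_{\alpha+1/2}]}$. Integrating in $z$ over the moving layer $L_\alpha(x,t)$ and applying the Leibniz rule to pull $\partial_t$ and $\partial_x$ outside the integrals produces, on each equation, the terms $\partial_t(h_\alpha C^j_\alpha)$, $\partial_x(h_\alpha u_\alpha C^j_\alpha)$, plus interface contributions coming both from the differentiated integration bounds and from the direct vertical integration of $\partial_z(wC^j)$. Grouping the interface terms at $z_{\alpha\pm 1/2}$ recovers exactly the mass-exchange fluxes $C^j_{\alpha\pm 1/2}G_{\alpha\pm 1/2}$, with $G$ defined by (\ref{eq:massexchange})--(\ref{eq:massexchangelim}); this part is an almost verbatim copy of what was done for $T$ in Proposition~\ref{prop:mc_simple}.

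Next I would deal with the reaction terms. Because $\mu(q,I)$, $\lambda(C^3,q)$ and the linear loss $R$ involve no derivatives of the state, vertical integration against the indicator function reduces them to $h_\alpha$ times their evaluation at the layer values, producing the right-hand sides of (\ref{eq:eqX})--(\ref{eq:eqS}) with $q_\alpha=C^2_\alpha/C^1_\alpha$ (well-defined since $C^1_\alpha>0$ is assumed for biologically admissible data). To close the system it remains to specify the interface values $C^j_{\alpha\pm 1/2}$; by exactly the same consistency argument used for $u$ and $T$ in (\ref{ualphaplus})--(\ref{Talphaplus}), one upwinds with respect to the sign of $G_{\alpha+1/2}$, which gives the stated formula.

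The one genuinely new point, and the main obstacle, is the treatment of the light intensity $I_\alpha$ appearing in $\mu(q_\alpha,I_\alpha)$: unlike $\mu$, $\lambda$ or $R$, the map $z\mapsto I(z)$ is nontrivial through the attenuation integral $\psi(C^2,I^*,z)$, so the equality $\int_{L_\alpha}\mu(q,I(z))C^1\,dz=h_\alpha\mu(q_\alpha,I_\alpha)C^1_\alpha$ is not automatic. I would resolve this by projecting $\psi$ itself onto $\Bbb{P}_{0,H}^{N,t}$, replacing the continuous exponential attenuation by its discrete counterpart $\psi_\alpha=\sum_{\beta>\alpha} a\gamma(I^*)C^2_\beta h_\beta + bH_\alpha^{\rm up} + \tfrac12 a\gamma(I^*)C^2_\alpha h_\alpha$ and setting $I_\alpha=I_0 e^{-\psi_\alpha}$. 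With this definition the closure is exact at the level of the Galerkin projection, and combining the transport and reaction contributions yields (\ref{eq:eqX})--(\ref{eq:eqS}) as stated.
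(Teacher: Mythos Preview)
Your proposal is correct and follows essentially the same approach as the paper: Galerkin projection of $C^1,C^2,C^3$ onto $\Bbb{P}_{0,H}^{N,t}$, layerwise integration with the Leibniz rule for the transport terms, and direct evaluation of the reaction terms on each layer, with the interface values fixed by upwinding on the sign of $G_{\alpha+1/2}$. Your explicit treatment of the discrete light intensity $I_\alpha$ via a layer-projected attenuation $\psi_\alpha$ is in fact more careful than the paper's proof, which simply writes $\int_{L_\alpha}\mu(q,I)C^1\,dz=h_\alpha\mu(q_\alpha,I_\alpha)C^1_\alpha$ without specifying how $I_\alpha$ is obtained.
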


\begin{proof} As for Eqs.~(\ref{eq:NS_2d1Boussi})-(\ref{eq:NS_2d4Boussi}), we use a Galerkin approximation of the biological variables on $\Bbb{P}_{0,H}^{N,t}$. The Galerkin approximation on $\Bbb{P}_{0,H}^{N,t}$ allows to write 
\begin{equation}
Y^{N}(x,z,\{z_\alpha\},t)  = \sum_{\alpha=1}^N
1_{[z_{\alpha-1/2},z_{\alpha+1/2}]}(z)Y_\alpha(x,t),
\label{eq:ulayer2}
\end{equation}
for $Y \in (C^{1},C^{2},C^{3})$.
\noindent Then we perform an integration of Eqs.~(\ref{eqC1Boussi})-(\ref{eqC3Boussi}) over the layer $\alpha$. Let us do it term by term.
\begin{equation}
\int_{z_{\alpha-1/2}}^{z_{\alpha+1/2}}{\frac{\partial  Y}{\partial t }dz} = \frac{\partial h_{\alpha}Y_{\alpha}}{\partial t} - (Y_{\alpha+1/2}\frac{\partial z_{\alpha+1/2}}{\partial t} - Y_{\alpha-1/2}\frac{\partial z_{\alpha-1/2}}{\partial t})
\end{equation}

\noindent For the transport terms it yields:
\begin{align}
\int_{z_{\alpha-1/2}}^{z_{\alpha+1/2}}{\frac{\partial  uY}{\partial x }dz} &= \frac{\partial u_{\alpha}h_{\alpha}Y_{\alpha}}{\partial x} - (u_{\alpha+1/2}Y_{\alpha+1/2}\frac{\partial z_{\alpha+1/2}}{\partial x} - u_{\alpha-1/2}Y_{\alpha-1/2}\frac{\partial z_{\alpha-1/2}}{\partial x})\\
\int_{z_{\alpha-1/2}}^{z_{\alpha+1/2}}{\frac{\partial  wY}{\partial z }dz} &= w_{\alpha+1/2}Y_{\alpha+1/2} - w_{\alpha-1/2}Y_{\alpha-1/2}
\end{align}

\noindent For the reaction terms we get:
\begin{align}
\int_{z_{\alpha-1/2}}^{z_{\alpha+1/2}}{(\mu(q,I)
C^{1} - R C^{1})dz} &=h_{\alpha}(\mu(q_{\alpha},I_{\alpha})
C^{1}_{\alpha} - R C^{1}_{\alpha}),\\
\int_{z_{\alpha-1/2}}^{z_{\alpha+1/2}}{(\lambda(C^{3},q) C^{1} - R C^{2})dz}&= h_{\alpha}(\lambda(C^{3}_{\alpha},q_{\alpha}) C^{1}_{\alpha} - R C^{2}_{\alpha}),\\
\int_{z_{\alpha-1/2}}^{z_{\alpha+1/2}}{-\lambda(C^{3},q) X dz} &= -h_{\alpha}(\lambda(C^{3}_{\alpha},q_{\alpha}) C^{1}_{\alpha}),
\end{align}
where $$q = \frac{C^{2}}{C^{1}} \mbox{ and } q_{\alpha} = \frac{C^{2}_{\alpha}}{C^{1}_{\alpha}}.$$

\noindent Using the notation (\ref{eq:massexchange}) and (\ref{eq:massexchangelim}) we recover Eqs.~(\ref{eq:eqX})-(\ref{eq:eqS}).
\end{proof}
\subsection{Kinetic interpretation}

The kinetic approach consists in linking the behaviour of some macroscopic fluid systems - Euler or Navier-Stokes equations, Saint-Venant system - with Boltzmann type kinetic equations. Boltzmann equation was first introduced in gas dynamics. It represents the evolution of a density of particles in a gas. Kinetic schemes have been widely used for the resolution of Euler equations \cite{Khobalatte1992, Bourgat1994}. Given the analogy between Euler and Saint-Venant equations, recent work has been carried out to adapt those schemes to the shallow water systems (\cite{Audusse2004a}). The first step is the introduction of fictitious particles, the definition of a density of particles and the equation governing its evolution.

\subsection{The kinetic equations}

The process to obtain the kinetic interpretation of the multilayer model with varying density is very similar to the one used in \cite{Audusse2010a}. For that reason we will only detail the kinetic interpretation of the biological system. For a given layer $\alpha$, a distribution function $M_\alpha(x,t,\xi)$ of fictitious particles with microscopic velocity $\xi$ is introduced to obtain a linear kinetic equation equivalent to the macroscopic model.\\

\subsection{Kinetic model derivation}

Let us introduce a real function $\chi$ defined on
$\mathbb{R}$, compactly supported and which have the following properties
\begin{equation}
\left\{\begin{array}{l}
\chi(-w) = \chi(w) \geq 0\\
\int_{\mathbb{R}} \chi(w)\ dw = \int_{\mathbb{R}} w^2\chi(w)\ dw = 1.
\end{array}\right.
\label{eq:chi1}
\end{equation}
Now let us construct a density of particles $M_\alpha(x,t,\xi)$
defined by a Gibbs equilibrium: the microscopic density of particles
present at time $t$, in the layer $\alpha$, at the abscissa $x$ and with
velocity $\xi$ given by
\begin{equation}
M_\alpha = \frac{ h_\alpha(x,t)}{c_\alpha} \chi\left(\frac{\xi -
    u_\alpha(x,t)}{c_\alpha}\right),
\label{eq:Malpha}
\end{equation}
with
$$c_\alpha^2 = p_\alpha,$$
and $p_\alpha$ defined by (\ref{palphadef}).\\

Likewise, we define $N_{\alpha+1/2}(x,t,\xi)$ by
\begin{equation}
 \hspace*{-1.2cm} N_{\alpha+1/2}(x,t,\xi) = G_{\alpha+1/2}(x,t) \ \delta \left(\xi -
  u_{\alpha+1/2}(x,t)\right),\label{eq:N}
\end{equation}
for $\alpha=0,\ldots,N$ and where $\delta$ denotes the Dirac distribution. \\

The quantities $G_{\alpha+1/2}$, $0\leq\alpha\leq N$ represent the
mass exchanges between layers $\alpha$ and $\alpha+1$, they are defined in
(\ref{eq:massexchange}) and satisfy the conditions (\ref{eq:massexchangelim}), so $N_{1/2}$ and $N_{N+1/2}$ also satisfy 
\begin{equation}
N_{1/2}(x,t,\xi) = N_{N+1/2}(x,t,\xi) = 0.
\label{eq:Nlim}
\end{equation}
For the Droop variables, we have the equilibria
\begin{eqnarray}
&& U^j_\alpha(x,t,\xi) = C^{j}_\alpha(x,t) M_\alpha(x,t,\xi),\qquad \alpha=1,\ldots,N, \qquad j=1,\ldots,3 \label{eq:Uialpha}\\
&& V^j_{\alpha+1/2}(x,t,\xi) = C^{j}_{\alpha+1/2}(x,t) N_{\alpha+1/2}(x,t,\xi),\qquad \alpha=0,\ldots,N, \qquad j=1,\ldots,3.  \label{eq:Vialpha}
\end{eqnarray}
With the previous definitions we write a kinetic representation of the
multilayer system \textbf{without biological reaction terms}
and we have the following proposition:
\begin{prop}
The functions $(C^{j})$ are strong solutions of the system (\ref{eq:eqX})
-(\ref{eq:eqS}) without reaction terms if and only if the set of
equilibria $\{U^{j}_\alpha(x,t,\xi)\}_{\alpha=1}^N$
are solutions of the kinetic equations
\begin{equation}
 \frac{\partial U^j_\alpha}{\partial t} + \xi \frac{\partial U^j_\alpha}{\partial x} 
 - V^j_{\alpha+1/2} + V^j_{\alpha-1/2} = Q_{U^{j}_\alpha},\label{eq:gibbsC}
\end{equation}
for $\alpha=1,\ldots,N$, $j = 1\ldots 3$ with $\{N_{\alpha+1/2}(x,t,\xi), V^j_{\alpha+1/2}(x,t,\xi)\}_{\alpha=0}^N$ satisfying
 (\ref{eq:N})-(\ref{eq:Vialpha}). 

The quantities $Q_{U^j_\alpha} = Q_{U^j_\alpha}(x,t,\xi)$ are ``collision terms''  equal to zero at the
macroscopic level i.e. which satisfy for a.e. values of $(x,t)$
\begin{equation}
\int_{\mathbb{R}} Q_{U^j_\alpha} d\xi = 0,\quad \int_{\mathbb{R}} \xi
Q_{U^j_\alpha} d\xi =0,\quad\mbox{and}\quad\quad \int_{\mathbb{R}}
\xi^{2}Q_{U^j_\alpha} d\xi = 0.
\label{eq:collision}
\end{equation}
\label{prop:kinetic_sv_mc}
\end{prop}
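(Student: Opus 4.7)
The plan is to prove both implications by taking the $\xi$-moments $\int d\xi$, $\int \xi\, d\xi$, $\int \xi^{2}d\xi$ of the kinetic equation (\ref{eq:gibbsC}). The key structural observation I will exploit is the factorization $U^{j}_{\alpha} = C^{j}_{\alpha} M_{\alpha}$ and $V^{j}_{\alpha+1/2} = C^{j}_{\alpha+1/2} N_{\alpha+1/2}$: since $C^{j}_{\alpha}$ and $C^{j}_{\alpha+1/2}$ do not depend on $\xi$, their moments reduce to those of the hydrodynamic equilibria $M_{\alpha}$ and $N_{\alpha+1/2}$ multiplied by the scalar $C^{j}$, which is why no new closure is needed here.

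First, I would record the moments of the hydrodynamic building blocks. Using (\ref{eq:chi1}) with the change of variable $w=(\xi - u_{\alpha})/c_{\alpha}$ in (\ref{eq:Malpha}), one obtains $\int M_{\alpha}\,d\xi = h_{\alpha}$, $\int \xi M_{\alpha}\,d\xi = h_{\alpha} u_{\alpha}$, and $\int \xi^{2} M_{\alpha}\,d\xi = h_{\alpha} u_{\alpha}^{2} + h_{\alpha} p_{\alpha}$; the Dirac structure of $N_{\alpha+1/2}$ in (\ref{eq:N}) yields $\int \xi^{k} N_{\alpha+1/2}\,d\xi = G_{\alpha+1/2}\,u_{\alpha+1/2}^{k}$ for $k=0,1,2$, and the factorization then supplies the analogous moments for $U^{j}_{\alpha}$ and $V^{j}_{\alpha+1/2}$.

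The ``$\Leftarrow$'' direction is direct: integrating (\ref{eq:gibbsC}) in $\xi$ and using the three conditions (\ref{eq:collision}) kills the right-hand side, and substituting the moments above produces exactly (\ref{eq:eqX})-(\ref{eq:eqS}) without the reaction terms. For the ``$\Rightarrow$'' direction, I would take $Q_{U^{j}_{\alpha}}$ to be the residual of (\ref{eq:gibbsC}) and verify (\ref{eq:collision}). Applying the product rule, and subtracting $C^{j}_{\alpha}$ times the kinetic equation already established for the hydrodynamic layer in the spirit of \cite{Audusse2010a}, namely $\partial_{t}M_{\alpha}+\xi\partial_{x}M_{\alpha}-N_{\alpha+1/2}+N_{\alpha-1/2}=Q_{M_{\alpha}}$, this residual rewrites as
\[
Q_{U^{j}_{\alpha}} = M_{\alpha}(\partial_{t}C^{j}_{\alpha} + \xi\partial_{x}C^{j}_{\alpha}) + C^{j}_{\alpha} Q_{M_{\alpha}} + (C^{j}_{\alpha}-C^{j}_{\alpha+1/2}) N_{\alpha+1/2} - (C^{j}_{\alpha}-C^{j}_{\alpha-1/2}) N_{\alpha-1/2}.
\]
The term $C^{j}_{\alpha} Q_{M_{\alpha}}$ contributes zero to all three moments by the collision conditions on $Q_{M_{\alpha}}$; for the zeroth moment, the remaining terms combine with the layer mass balance $\partial_{t}h_{\alpha}+\partial_{x}(h_{\alpha}u_{\alpha})=G_{\alpha+1/2}-G_{\alpha-1/2}$ to reproduce (\ref{eq:eqX})-(\ref{eq:eqS}) in non-conservative form, whose vanishing is the assumption.

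The main obstacle will be the first- and second-moment conditions, because the scalar transport of $C^{j}$ mixes with the momentum flux $h_{\alpha}u_{\alpha}^{2}+h_{\alpha}p_{\alpha}$ and with the interface contributions $G_{\alpha\pm1/2}u_{\alpha\pm1/2}^{k}$, so the cancellation is not term-by-term. The way I would carry this out is to combine the transport equation for $C^{j}_{\alpha}$ with the layer momentum balance (\ref{eq:eq4}) and use that the upwinding rule (\ref{ualphaplus})-(\ref{Talphaplus}) is applied with the same criterion to $u$, $T$ and $C^{j}$, so the interface values $C^{j}_{\alpha+1/2}$ and $u_{\alpha+1/2}$ are selected coherently and the higher-order interface fluxes are absorbed by the Saint-Venant closure already validated at the kinetic level in \cite{Audusse2010a}, leaving the residual with zero first and second $\xi$-moments as required.
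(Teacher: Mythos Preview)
Your treatment of the ``$\Leftarrow$'' direction and of the zeroth moment in the ``$\Rightarrow$'' direction is correct and is essentially what the paper does. The paper's proof is in fact far shorter than yours: it records the moment identities $\int U^{j}_{\alpha}\,d\xi = h_{\alpha}C^{j}_{\alpha}$, $\int \xi\,U^{j}_{\alpha}\,d\xi = h_{\alpha}C^{j}_{\alpha}u_{\alpha}$, $\int N_{\alpha+1/2}\,d\xi = G_{\alpha+1/2}$, and then integrates (\ref{eq:gibbsC}) once in $\xi$ to recover (\ref{eq:eqX})--(\ref{eq:eqS}). It does not carry out the product-rule decomposition you propose, nor does it address the first and second moment conditions at all.

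The part where you diverge from the paper --- and where your argument becomes vague --- is the verification of $\int \xi\,Q_{U^{j}_{\alpha}}\,d\xi=0$ and $\int \xi^{2}\,Q_{U^{j}_{\alpha}}\,d\xi=0$. Your closing paragraph appeals loosely to the upwinding rule and to the Saint-Venant kinetic closure from \cite{Audusse2010a}, but this does not go through. If you actually compute the first moment of your residual, after using the non-conservative tracer equation multiplied by $u_{\alpha}$, you are left with
\[
h_{\alpha}p_{\alpha}\,\partial_{x}C^{j}_{\alpha}
+(C^{j}_{\alpha+1/2}-C^{j}_{\alpha})\,G_{\alpha+1/2}\,(u_{\alpha}-u_{\alpha+1/2})
-(C^{j}_{\alpha-1/2}-C^{j}_{\alpha})\,G_{\alpha-1/2}\,(u_{\alpha}-u_{\alpha-1/2}),
\]
and the term $h_{\alpha}p_{\alpha}\,\partial_{x}C^{j}_{\alpha}$ has no mechanism to cancel: the momentum equation (\ref{eq:eq4}) contains pressure-interface terms and the paddlewheel source which do not appear anywhere in the tracer balance, so ``absorbing the higher-order interface fluxes into the Saint-Venant closure'' is not available here. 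The coherent upwinding of $u_{\alpha+1/2}$ and $C^{j}_{\alpha+1/2}$ only kills the interface products when $G_{\alpha+1/2}\geq 0$, not in general.

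The resolution is not to find a cleverer cancellation but to note that for a passively advected scalar the kinetic representation only needs the zeroth moment of $Q_{U^{j}_{\alpha}}$ to vanish; the three conditions in (\ref{eq:collision}) are the ones inherited from the hydrodynamic equilibrium $M_{\alpha}$ (mass, momentum, energy), and only the first is operative for (\ref{eq:eqX})--(\ref{eq:eqS}). The paper's proof tacitly uses exactly this: it integrates once in $\xi$ and stops. So your zeroth-moment argument is all that is required, and the ``main obstacle'' you identify for the higher moments is not an obstacle to the result but rather an indication that those extra conditions are not part of the equivalence for the biological variables.
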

\begin{proof}
Using the definitions (\ref{eq:Malpha}),(\ref{eq:Uialpha}) and the properties of the
function $\chi$, we have
\begin{equation}
l_\alpha H C^{j}_\alpha = \int_{\R}   U^j_\alpha (x,t,\xi) d\xi,\qquad
l_\alpha H C^{j}_\alpha u_\alpha = \int_{\R}   \xi U^j_\alpha (x,t,\xi) d\xi. \label{eq:intUi}\\
\end{equation}
From the
definition (\ref{eq:N}) of $N_{\alpha+1/2}$ we also have
\begin{equation}
\int_{\R} N_{\alpha+1/2}(x,t,\xi) d\xi = G_{\alpha+1/2},\label{eq:intN}
\end{equation}
A simple integration in $\xi$ of the equations
(\ref{eq:gibbsC}), always using (\ref{eq:collision}), gives
 the biological equations (\ref{eq:eqX})-(\ref{eq:eqS}). 
 \end{proof}

\subsection{The numerical scheme}
\label{numscheme}
Adding the viscous terms to the multilayer system obtained in Prop.~\ref{prop:mldroop} we end up with a system of the form
\begin{equation}
\frac{\partial X}{\partial t} + \frac{\partial F(X)}{\partial x} =
S_e(X) +  S_{v,f}(X) + S_{bio}(X),
\label{eq:glo}
\end{equation}
with $X=\left(k_{1}^{1},\ldots,k_{N}^{1},k_{1}^{2}\ldots k_{N}^{2}, k_{1}^{3}\ldots k_{N}^{3}\right)^T$ and  $k_{\alpha}^{1}= l_{\alpha} H
C^{1}_{\alpha}$, $k_{\alpha}^{2}= l_{\alpha} H
C^{2}_{\alpha}$, $k_{\alpha}^{3}= l_{\alpha} H
C^{3}_{\alpha}$. We denote $F(X)$ the flux of the conservative part,
$S_e(X)$, $S_{v,f}(X)$ and $S_{bio}(X)$ the source terms, respectively the
mass transfer,  the viscous and
friction effects and the biological reaction terms.\\

We introduce a $3N \times 3N$ matrix ${\cal K(\xi)}$ 
defined by ${\cal K}_{i,j} =\delta_{i,j}$ for $i,j=1,\ldots,N$ with $\delta_{i,j}$ the Kronecker symbol. Then, using Prop. \ref{prop:kinetic_sv_mc}, we can write
\begin{eqnarray}
X= \int_{\xi} {\cal K( \xi)} 
\left(\begin{array}{c}  U^{1}(\xi) \\  U^{2}(\xi)\\  U^{3}(\xi)\end{array}\right) d \xi, 
\quad F(X) = \int_{\xi}  \xi {\cal K( \xi)} 
\left(\begin{array}{c}  U^{1}(\xi) \\  U^{2}(\xi)\\  U^{3}(\xi) \end{array}\right) d \xi,
\label{eq:fx}\\
S_e(X) = \int_{\xi} {\cal K( \xi)} 
\left(\begin{array}{c}  V^{1}(\xi) \\  V^{2}(\xi)\\  V^{3}(\xi) \end{array}\right) d \xi, 
\label{eq:fxx}
\end{eqnarray}
with $U^{j}(\xi)=(U_{1}^{j}(\xi),\ldots,U_{N}^{j}(\xi))^T$,  and
$$
V(\xi)^{j}=\left(
\begin{array}{c}
V_{3/2}^{j}(\xi)-V_{1/2}^{j}(\xi)\\
\vdots\\
V_{N+1/2}^{j}(\xi)-V_{N-1/2}^{j}(\xi)
\end{array}
\right), \quad \forall j \in {1..3}.
$$ 
We refer to \cite{Audusse2010b} for the computation of $S_{v,f}(X)$.

To approximate the solution of (\ref{eq:glo}) we use a finite volume framework.
We assume that the computational domain is discretized by $I$ nodes $x_i$.
We denote $C_i$ the cell of length $\Delta
x_i=x_{i+1/2}-x_{i-1/2}$ with $x_{i+1/2}=(x_i+x_{i+1})/2$. For the time discretization, 
we denote $t^n = \sum_{k < n} \Delta t^k$ where the time steps $\Delta t^k$ will be
precised later through a CFL condition. 
We denote
$$X^n_{i}=\left(k^{1,n}_{1,i},\ldots,k^{1,n}_{N,i}, k^{2,n}_{1,i},\ldots,k^{2,n}_{N,i}, k^{3,n}_{1,i},\ldots,k^{3,n}_{N,i}\right)^T$$
the approximate solution at time $t^n$ on the cell $C_i$ with $k^{j,n}_{\alpha,i}= l_{\alpha} H^n_i C^{j,n}_{\alpha,i}$.

\subsection{Time splitting}

For the time discretization, we apply a time splitting to the equation (\ref{eq:glo}) and we write
\begin{eqnarray}
&&\frac{{\tilde X}^{n+1}-X^{n}}{\Delta t^n} + \frac{\partial
  F(X^n)}{\partial x} = S_e(X^n,{\tilde X}^{n+1}) +  S_{bio} (X^{n}),
\label{eq:glo1}\\
&&\frac{X^{n+1}-{\tilde X}^{n+1}}{\Delta t^n} - S_{v,f} (X^{n},X^{n+1})=0.
\label{eq:glo2}
\end{eqnarray}
The conservative part of (\ref{eq:glo1}) is computed by an explicit kinetic scheme. The mass exchange terms are deduced from the kinetic interpretation. The biological reaction terms are not included in the kinetic interpretation but simply deduced from quadrature formulas. Eventually, the viscous and friction terms $S_{v,f}$ in (\ref{eq:glo2}) do not depend on the fluid density $\rho$. Thus, their vertical discretization and their numerical treatment do not differ from earlier works of the authors \cite{Audusse2010b}. Due to potential dissipative effects, a semi-implicit scheme is adopted in the second step for reasons of stability.

\subsection{Discrete kinetic equation}

Starting from a piecewise constant approximation of the initial data, the general form of a finite volume discretization of system (\ref{eq:glo1}) is
\begin{equation}
{\tilde X}^{n+1}_{i}-X^{n}_{i} + \sigma_i^n
\left[F^n_{i+1/2}- F^n_{i-1/2}\right] = \Delta t^n {\cal S}_{e,i}^{n+1/2} + \Delta t^n {\cal S}_{bio}^{n},
\label{eq:fv}
\end{equation}
where 
$\sigma^n_i=\Delta t^n/\Delta x_i$ is the ratio between space and time steps and the numerical flux $F^n_{i+1/2}$ is an approximation of the exact flux estimated at point $x_{i+1/2}$.\\

In order to find a good expression of the numerical fluxes, we need to make an incursion in the microscopic scale. Therefore we denote the discrete particle density at time $n$, cell $i$ in the following manner:

$$M_{\alpha,i}^n (\xi)=  l_{\alpha}\frac{H^n_{i}}{c^n_{\alpha,i}} \chi\left(\frac{\xi - u^n_{\alpha,i}}{c^n_{\alpha,i}}\right), 
\qquad\mbox{with}\ c^n_{\alpha,i}= \sqrt{p^n_{\alpha,i}}$$
and following (\ref{palphadef})
$$p^n_{\alpha,i} = g \left(\frac{\rho^n_{\alpha,i} l_\alpha H^n_{i}}{2} + \sum_{j=\alpha+1}^N
\rho^n_{j,i} l_j H^n_{i} \right)- \frac{1}{h_{\alpha}}\int_{z_{\alpha -1/2}}^{z_{\alpha +1/2}}{\int_{z}^{\eta}{F_z(x_{i},z',t^{n})dz'}dz}.$$
Then the equation (\ref{eq:gibbsC}) is discretized for each $\alpha$ by applying a simple upwind scheme
\begin{eqnarray}
& &\hspace*{-1cm} g_{\alpha,i}^{j,n+1} (\xi) = U_{\alpha,i}^{j,n} (\xi) - \xi\sigma^n_i
\left(U_{\alpha,i+1/2}^{j,n} (\xi) - U_{\alpha,i-1/2}^{j,n} (\xi)\right)
+\nonumber\\
& & \quad\Delta t^n \left( V_{\alpha+1/2,i}^{j,n+1/2} (\xi) - V_{\alpha-1/2,i}^{j,n+1/2}(\xi)\right),
\label{eq:cindisg}
\end{eqnarray}
where
$$ U_{\alpha,i+1/2}^{j,n} =
\left\{\begin{array}{ll}
U_{\alpha,i}^{j,n} & \mbox{if } \xi \geq 0\\
U_{\alpha,i+1}^{j,n} & \mbox{if } \xi < 0.
\end{array}\right.$$
The quantity $g_{\alpha,i}^{j,n+1}$ is not an equilibrium 
but if we set
\begin{equation}
l_\alpha H^{n+1}_{i} C_{\alpha,i}^{j,n+1} =\int_{\R} g^{j,n+1}_{\alpha,i} (\xi) d\xi.
\label{eq:Hdis1}
\end{equation}
we recover the macroscopic quantities at time $t^{n+1}$.

\subsection{Numerical flux of the finite volume scheme}

In this section, we give some details for the computation of the fluxes introduced in 
the discrete equation (\ref{eq:fv}).
If we denote
\begin{equation}
F^n_{i+1/2} = F(X^n_i,X^n_{i+1}) = F^+(X^n_i) + F^-(X^n_{i+1}),
\label{eq:fluxes}
\end{equation}
following (\ref{eq:fx}), we define
\begin{equation}
F^-(X^n_i) = \int_{\xi \in {\R}^-}  \xi {\cal K( \xi)} U^n_i(\xi) d \xi,\quad
F^+(X^n_i) = \int_{\xi \in {\R}^+}  \xi {\cal K(\xi) } U^n_i(\xi) d \xi
\end{equation}
with $U^n_i(\xi)=(U^n_{1,i}(\xi),\ldots,U^n_{N,i}(\xi))^T$.\\

\noindent More precisely the expression of
$F^+(X_i)$ can be written
\begin{align}
F^+(X_i) &= \left(F^+_{k_1^1}(X_i),\ldots,F^+_{k_N^1}(X_i),\right.\nonumber\\ 
& \left .F^+_{k_1^2}(X_i),\ldots,F^+_{k_N^2}(X_i), F^+_{k_1^3}(X_i),\ldots,F^+_{k_N^3}(X_i)\right)^T,
\label{eq:flux}
\end{align}
with
\begin{equation}
F^+_{k_\alpha^{j}}(X_i) = C_{\alpha,i}^{j} l_{\alpha}H_i \int_{w\geq -\frac{u_{\alpha,i}}{c_i}}
(u_{\alpha,i} + w c_{\alpha,i})\chi(w)\ dw\label{eq:FH}.
\end{equation}
This kinetic method is interesting because it gives a very simple and
natural way to propose a numerical flux through the kinetic
interpretation. Indeed, choosing
$$\chi(w)=\frac{1}{2\sqrt{3}} 1_{|w| \leq \sqrt{3}}(w),$$
 the integration in (\ref{eq:FH}) can be done analytically.
 
 However, this method proved to be numerically diffusive. That is why in practice, following \cite{Audusse2010a, Audusse2003}, we rather introduce the upwinding in the biological equations according to the sign of the total mass flux. We introduce then new biological fluxes that we will use instead of (\ref{eq:FH})
 \begin{equation}
F^+_{k_\alpha^{j}}(X_i) = C_{\alpha,i+1/2}^{j} F^{+}_{h_{\alpha}}\label{eq:FH2}
\end{equation}
with $$ F^{+}_{h_{\alpha}} = l_{\alpha}H_i \int_{w\geq -\frac{u_{\alpha,i}}{c_i}}
(u_{\alpha,i} + w c_{\alpha,i})\chi(w)\ dw$$
representing the mass flux in layer $\alpha$ at interface $i+1/2$ (see \cite{Audusse2010a}) and the interfacial quantity being defined in the following manner
$$ C_{\alpha,i+1/2}^{j,n} =
\left\{\begin{array}{ll}
C_{\alpha,i}^{j,n} & \mbox{if } F^{+}_{h_{\alpha}}  \geq 0\\
C_{\alpha,i+1}^{j,n} & \mbox{if } F^{+}_{h_{\alpha}}  < 0.
\end{array}\right.$$

\subsection{The source terms}
 We refer to \cite{Audusse2010a} for the treatment of the mass exchanges terms $S_{e}(X^{n},\tilde{X}^{n+1})$. For the reaction terms no difficulties occur since the biological variables and what they depend on, light for instance, are also projected on the same Galerkin basis. We write
 \begin{equation}
 \int_{z_{\alpha-1/2}}^{z_{\alpha+1/2}}{\int_{x_{i-1/2}}^{x_{i+1/2}}{R(x,z,t^{n})}}=h_{\alpha}\Delta x_{i}R_{i}^{n}
 \end{equation}
 where $R(x,z,t)$ represents one of the three reaction terms we have in the Droop model.
 
 \subsection{Properties}
 Although we are not going to provide any proof in this section we want to recall some important features of the model used in this paper (see \cite{Audusse2010a, Audusse2010b, Sainte-Marie2010} for the detailed proofs). First of all, under a certain CFL condition which means that the  quantity of water leaving the cell during a time step is less than the current water volume of the cell, the water height and the biological concentrations remain non-negative. Second of all, \cite{Audusse2010a} states that a passive tracer satisfies a maximum principle. Finally, we want to specify that a second order scheme in space and time is possible and has been performed in the simulations of section \ref{secsim}. The second order in time is achieved through a classical Heun method. We apply a second order in space by a limited reconstruction of the variables \cite{Audusse2005} based on the prediction of the gradients in each cells, a linear interpolation followed by a limitation procedure.

\section{Simulations}
\label{secsim}

\subsection{Analytical validation on non trivial steady states}
In this section we show that we can find analytical solutions of the coupled problem, provided that we use Euler equations and a simplified model for biology. We want to emphasize the importance of this part. Validating a numerical code is indeed a complex but highly required task when it comes to non trivial situations. It is clear though that analytical solutions of the whole coupled problem cannot be found. However, we propose here a simplified version of a biological model that would be embedded in free surface Euler equations and for which we can find non trivial steady states.

Let (u,w) be the following vector field:
\begin{numcases}
\strut u(x,z) = \alpha\beta\frac{cos(\beta(z-z_{b}))}{sin(\beta H)},\label{uanalytic}\\
w(x,z) = \frac{\alpha\beta}{sin(\beta H)^{2}}\left[ sin(\beta(z-z_{b}))cos(\beta H)\frac{\partial H}{\partial x} + cos(\beta(z-z_{b}))sin(\beta H)\frac{\partial z_{b}}{\partial x}\right]\label{wanalytic},
\end{numcases}
with $\alpha$, $\beta$ being given constants and $H(x)$ being known.
In \cite{JSM_ANALYTIC}, the authors show that it is an analytical solution of the Euler system of equations for free surface flows. In order to validate the coupling with biological equations which is performed in this paper, we propose a simplified stationary coupled model.

Let us consider the following scalar field
$$T(x,z) = e^{-(H-(z-zb))}.$$
We can check by simple calculation that $T$ is solution of
\begin{equation}
\underbrace{\frac{\partial uT}{\partial x} + \frac{\partial wT}{\partial z}}_{\mbox{advection}} = \underbrace{f(x,z)T(x,z)}_{\mbox{reaction}}
\label{eqbioanalytique}
\end{equation}
where
\begin{equation}
f(x,z) = \alpha\beta\frac{cos(\beta(z-z_b))}{sin(\beta H)}\left(\frac{tan(\beta(z-z_b))}{tan(\beta H)}-1\right)\frac{\partial H}{\partial x}.
\end{equation}
Clearly, this equation is a simplified and alternate version of a biological model, with advection and reaction terms. Notice that solutions to (\ref{eqbioanalytique}) represent non trivial equilibria between advection and reaction terms.

Eventually, we compare analytical and numerical results of the following situation: a given hydrodynamic and biological flow is imposed at the left boundary of a 20m long pool with topography $z_{b} = 0.2e^{(x-8)^{2}} - 0.4e^{(x-12)^{2}}$, a given water height is fixed at the right boundary. To solve this analytically we follow \cite{JSM_ANALYTIC}: given the topography  $z_{b}$, between horizontal coordinates $x = 0$ and $x = 20$, we recover $H(x)$ and deduce $u(x,z)$ and $w(x,z)$ for $\alpha = 0.4$ and $\beta = 1.5$ thanks to the above formula (\ref{uanalytic}, \ref{wanalytic}). Numerically, we impose the analytical flux at the left boundary, the analytical height at the left boundary, and run the code for 500s, with 300 nodes and 20 layers (thus 6000 vertices), while initial conditions were set up to zero for every variable ($u$, $w$, $T$). We see in Fig.\ref{analyticSol} that we recover numerically the hydrological and tracer steady states solutions for this simplified coupled model. Therefore, we consider that our method and our code are likely to produce valid results.

\begin{figure}[h!]
\begin{center}
\includegraphics[scale = 0.8]{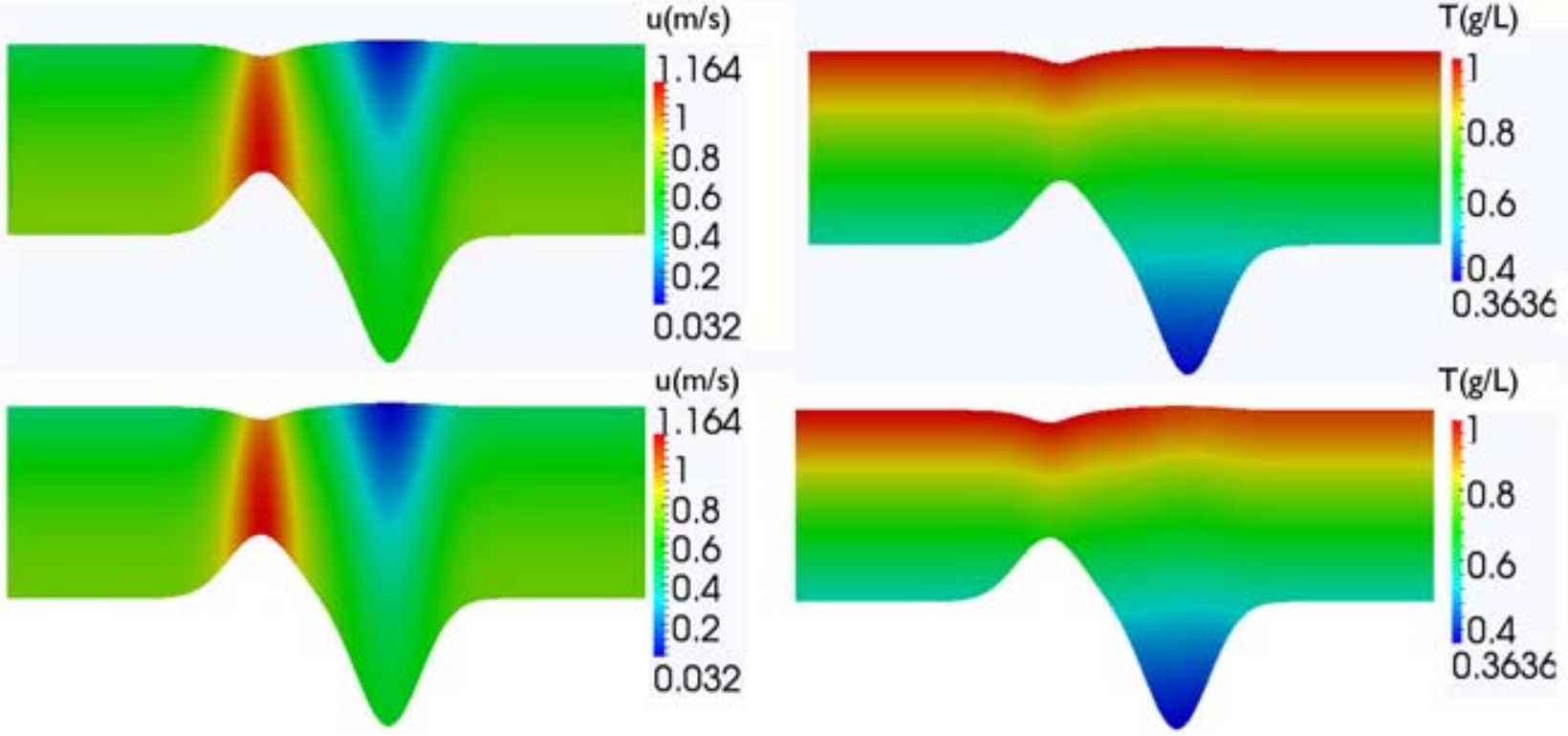}
\caption{Analytical (top) and numerical(bottom) steady states solutions for hydrodynamics and tracer in case $\alpha = 0.4$, $ \beta = 1.5$ and $z_{b} = 0.2e^{(x-8)^{2}} - 0.4e^{(x-12)^{2}}$. A given flow is imposed at the left boundary whereas the output boundary condition concerns the water height.}
\label{analyticSol}
\end{center}
\end{figure}

\subsection{Numerical simulations of a raceway}
\subsubsection{Light}
As explained in section \ref{secmodel}, the light intensity at a particular point will depend both on the amount of water above this point (i.e. related to the depth) and on the quantity of encountered chlorophyll (proportional to the concentration in intracellular nitrogen). Additionally, we explain in section \ref{secnumscheme} that a Galerkin approximation of the variables is carried out. Hence light is also discretized along the depth by layers. We show in Fig.\ref{light} the light profile for different values of $\gamma(I^{*})$. Recall that this value is related to the average irradiance perceived by microalgae the day before (photoadaptation phenomenon). The parameters used are exposed in Table \ref{tableLight}. We see for any curve the the exponential decay. Moreover, this figure shows that the more exposed to light the  microalgae were, the more receptive to it they are (no photoinhibition occurs in those ranges of irradiance).

\begin{minipage}{.45\textwidth}\centering
\includegraphics[height = 5.5cm]{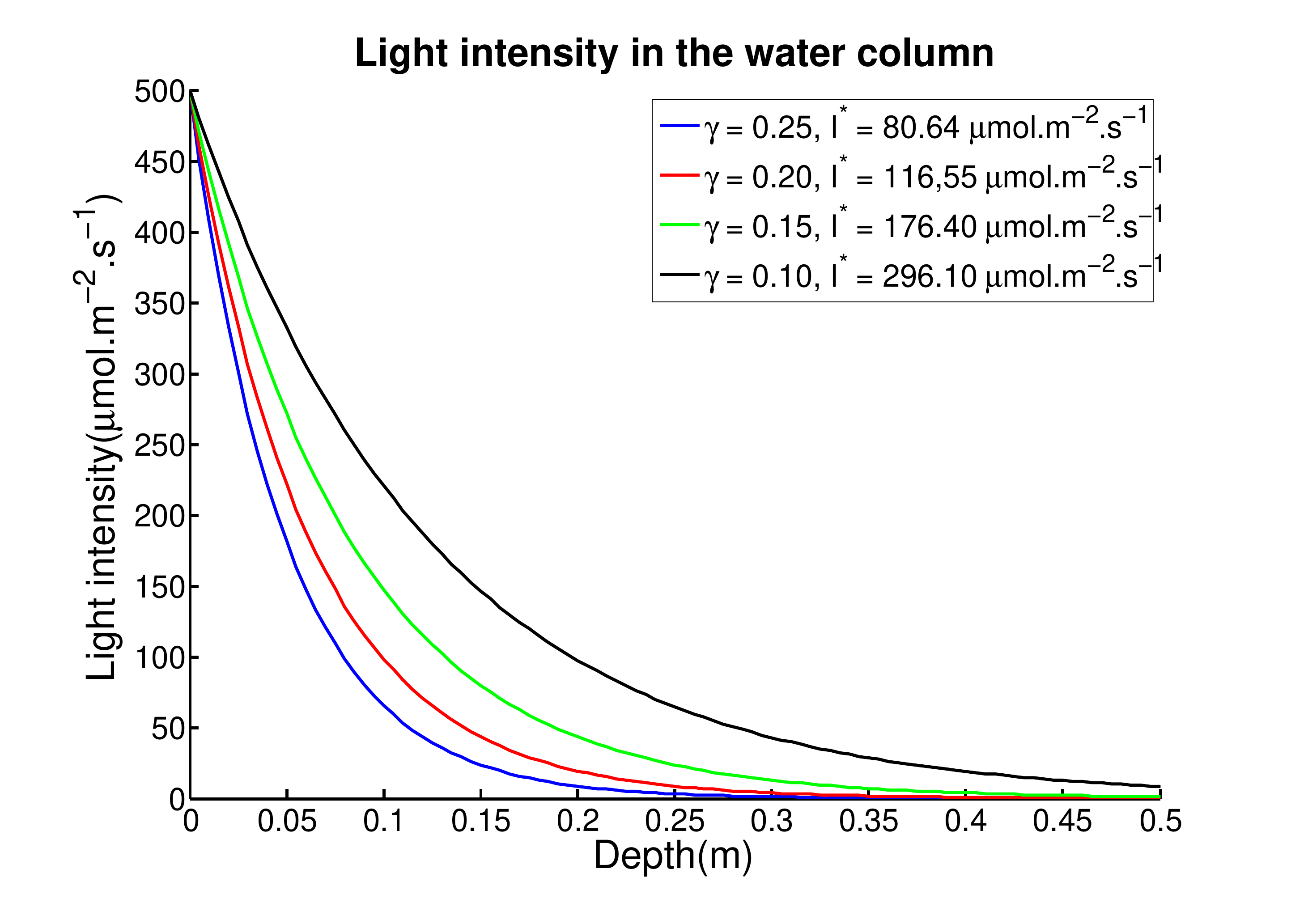}
\captionof{figure}{Light intensity in the water column for a concentration in nitrogen equal to 5.0 $gN.m^{-3}$. The figure shows the exponential decay of light with depth for different values of $\gamma(I^{*}).$}
\label{light}
\end{minipage}
\begin{minipage}{.45\textwidth}\centering
     \begin{tabular}{|c|c|c|}
	\hline
	Parameter & Value & Unit \\
	\hline
	$I_{0,max}$ & 500 & $\mu mol.m^{-2}.s^{-1}$ \\
	a & 16.2 & $m^{2}.gChl^{-1}$ \\
	b & 0.087 & $m^{-1}$ \\ 
	$C^{2}$ & 5.0 & $gN.m^{-3}$   \\   
	\hline
      \end{tabular}
\captionof{table}{Parameters used for the computation of light.}
\label{tableLight}
\end{minipage}

\subsubsection{Paddlewheel}
\label{paddle}
We aim at representing the kind of raceway shown in Fig.\ref{raceway3D}.
 \begin{figure}[h!]
\begin{center}
\includegraphics[scale = 0.3]{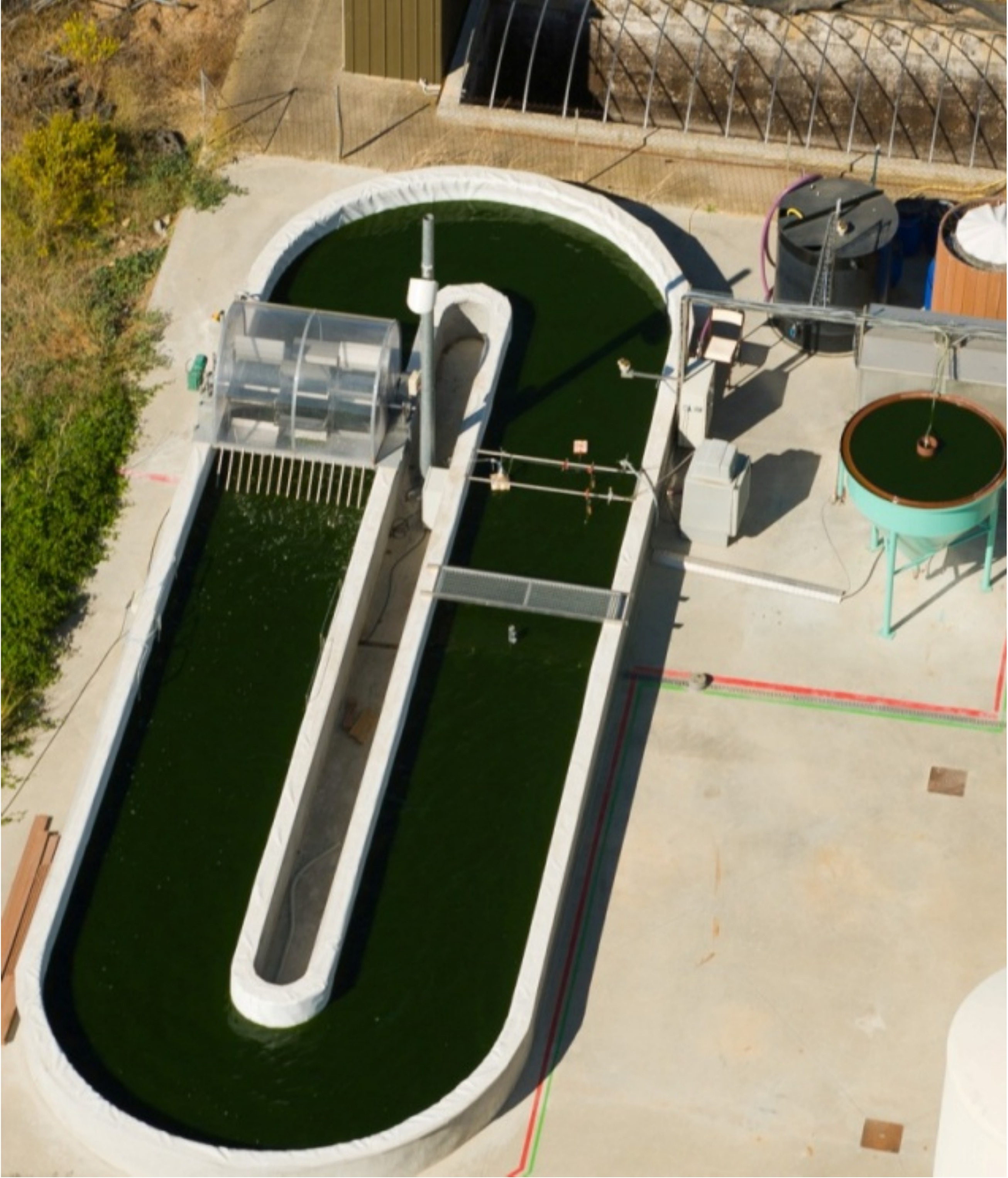} 
\caption{A typical raceway for cultivating microalgae. Notice the paddlewheel which mixes the culture suspension. Picture from INRA (ANR Symbiose project).}
\label{raceway3D}
\end{center}
\end{figure}
In order to model it in 2D, periodic conditions are applied on a rectangular pool containing a paddlewheel in its first half (Fig.\ref{forceoutlook}). The wheel is not modelled physically but is represented by a force that is able to mimic its effect and give the system an equivalent energy. Therefore the following expression is assumed:
\begin{numcases}
\strut 
F_x (x,z,t) = F \left(\sqrt{(x-x_{wheel})^2+(z-z_{wheel})^2} \omega\right)^{2}\cos\theta\\
F_z (x,z,t) = F \left(\sqrt{(x-x_{wheel})^2+(z-z_{wheel})^2} \omega \right)^{2}\sin\theta
\end{numcases}
where $F$ is a constant, $\theta$ is the angle between the blade and the vertical direction, $\omega = \dot{\theta}$. The force is normal to the blade and depends on the square of the velocity of the point on the blade: the further it is from the center of the wheel, the bigger is the force. Therefore, the energy provided by the wheel is be proportional to the cube of the velocity. To include it in the model, the process is similar to what is explained in section \ref{numscheme}: the force is added in the Navier-Stokes equations in order to derive again the multilayer model. This adds a source term in the x-momentum equation and change the expression of the pressure given by the z-momentum equation. We remind the reader that non-hydrostatic terms have not been taken into account in the derivation of the model (though it is described in \cite{Sainte-Marie2010}). The obtained results in Fig.\ref{velresults} let us think that this may  be appropriate to model the paddlewheel effect.

 \begin{figure}[h!]
\begin{center}
\begin{tabular}{cc}
\includegraphics[scale = 0.40]{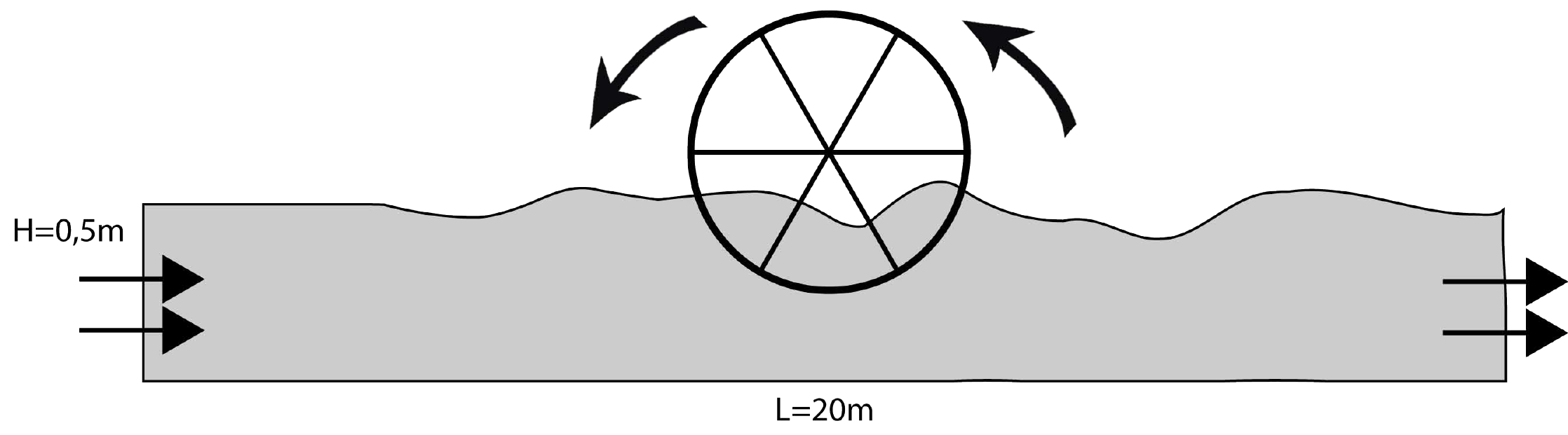}  & 
\includegraphics[scale = 0.25]{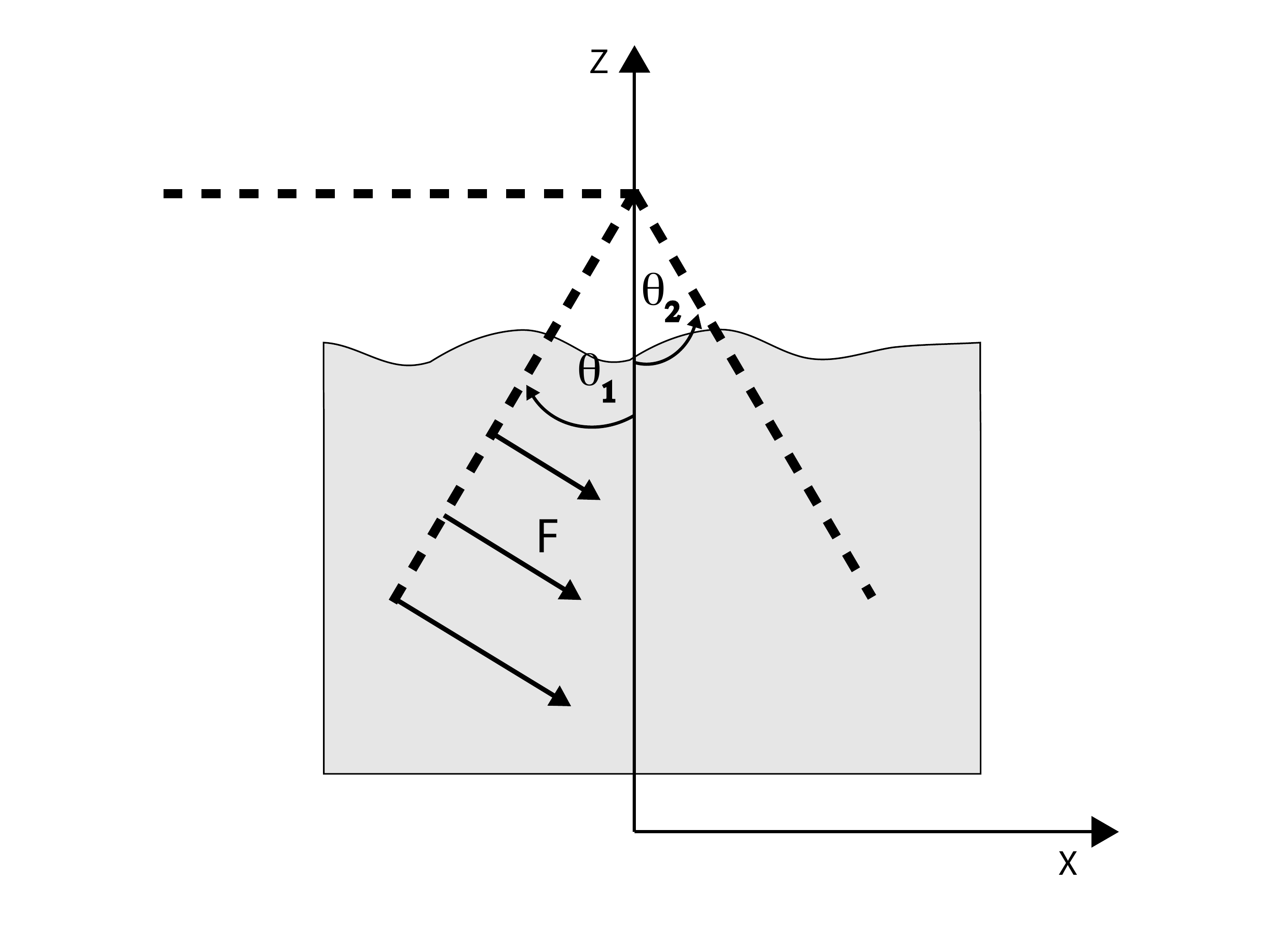}
\end{tabular}
\caption{Left: Pool and wheel dimensions and positions. Right: Outlook of the force applied to model the effect of the paddlewheel, supposed to be located in the first half of the pool, with maximum efficiency at the end of the blade.}
\label{forceoutlook}
\end{center}
\end{figure}

We performed several simulations for different paddlewheel angular velocities. We use a pool with length 20m and height 0,5m. We show in Fig \ref{velresults} that we are able to capture realistic hydrodynamics: a laminar flow of reasonable horizontal speed far from the wheel and a turbulent flow close to it. Concerning the hydrodynamics parameters, we take into account horizontal and vertical viscosity ($\mu =0.001 m^{2}.s^{-1}$)and a Navier-type bottom friction( $\kappa = 0.01m.s^{-1}$). Besides, $\omega = 0.85$rad/s. 
 \begin{figure}[h!]
\begin{center}
\includegraphics[scale = 0.50]{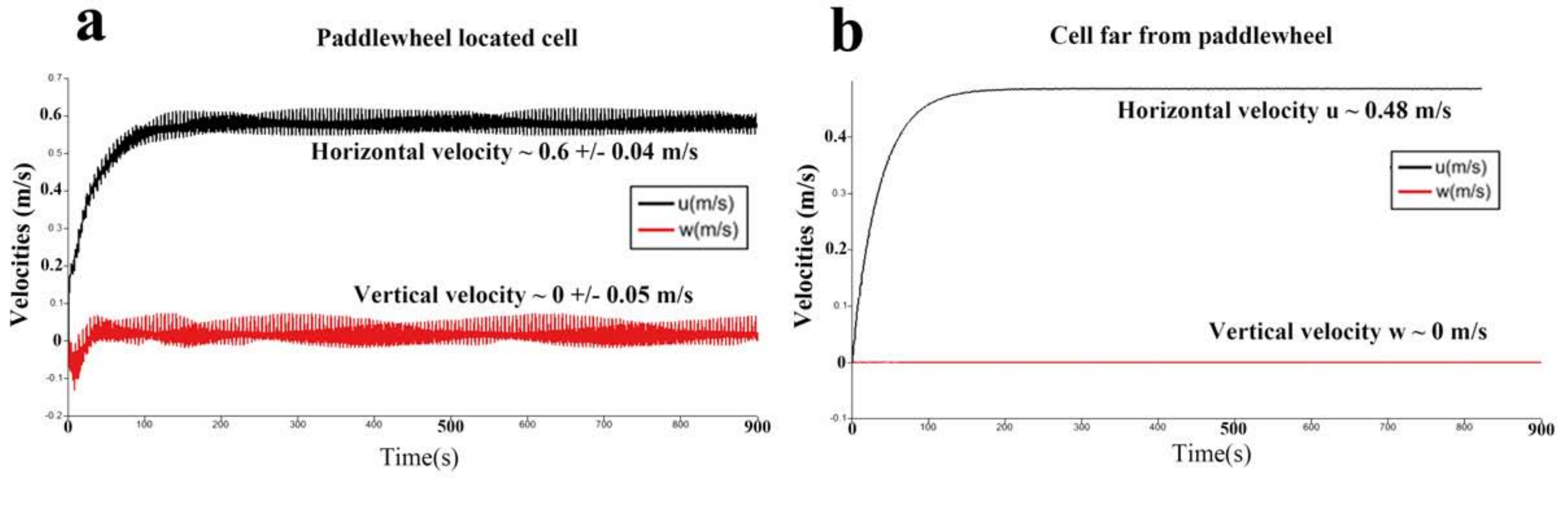}
\caption{(a): Velocities along vertical and horizontal axis in a cell located near from the wheel rotating at angular speed $\omega = 0.8 rad/s$. The flow is very turbulent. (b): Velocities along vertical and horizontal axis in a cell located far from the wheel. An asymptotic value of $0.48m.s^{-1}$ is reached.}
\label{velresults}
\end{center}
\end{figure}

In order to have a first idea of what is happening in the fluid, we add a tracer which is advected and slightly diffused. Fig \ref{film} illustrates the effect of the wheel on the mixing. In particular,  after several minutes, the pool seems well homogenized (here again $\omega  =0.85 rad/s$).

\begin{figure}
\begin{center}
\includegraphics[width = \textwidth]{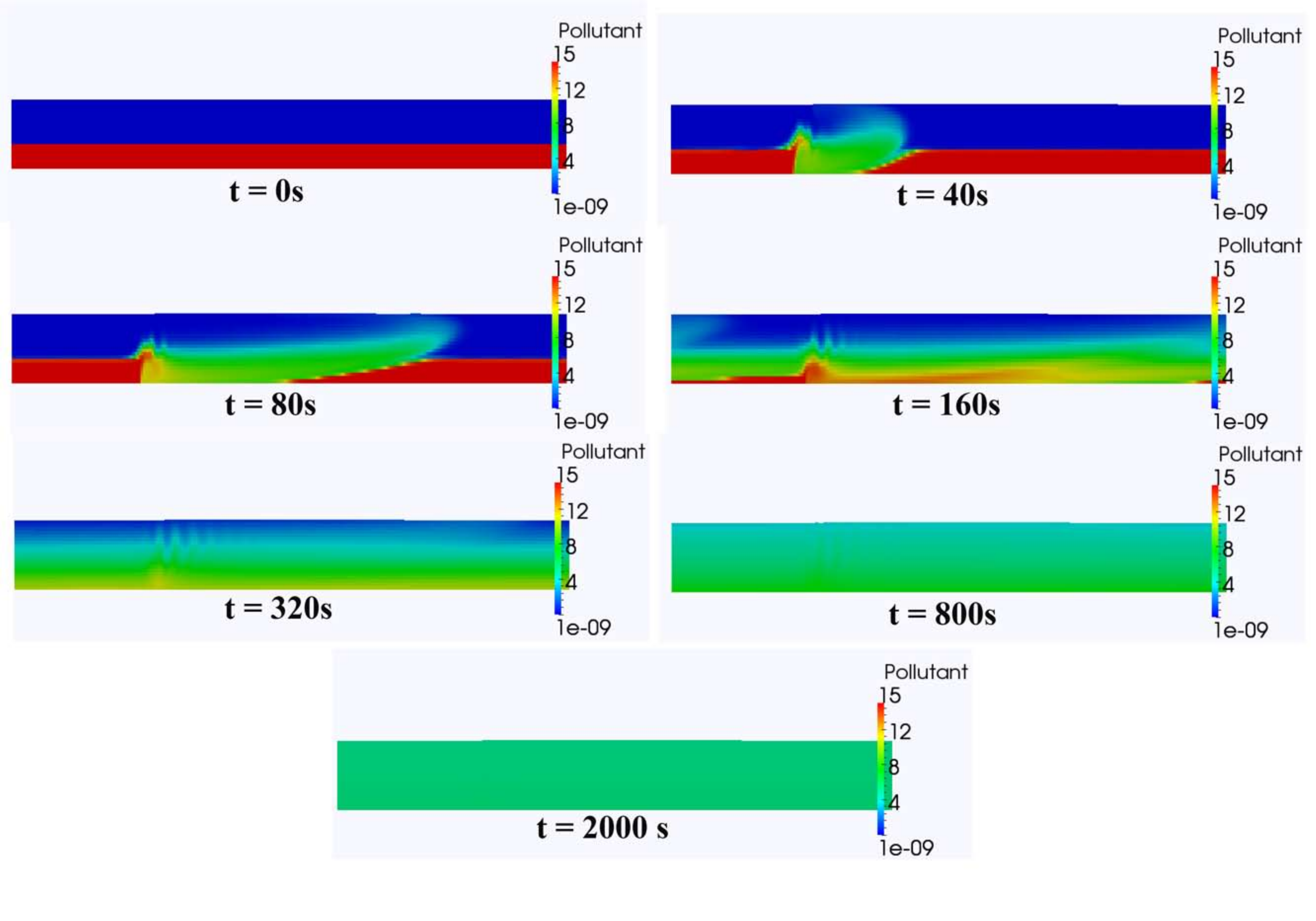}
\caption{Snapshots of the tracer (pollutant) concentration in a raceway set into motion by a paddlwheel of angular velocity $\omega = 0.85 rad/s$. It is clear that after several minutes, the raceway is totally homogeneous. Therefore, the paddlewheel has indeed the required effect on the mixing.}
\label{film}
\end{center}
\end{figure}

\subsection{Results}

Eventually, we performed numerical simulations of the whole coupled model explained in section \ref{secmodel} and discretized in section \ref{secnumscheme}. We use the same pool with length 20m and height 0,5m. The height is chosen such that at the bottom of the pond, the respiration rate is close to the growth rate. We perform several 20 days simulations for different agitations, different initial conditions and we compare them to reference simulations i.e. without paddlewheel. The parameters used for the simulation (concerning the biological system) are exposed in Table \ref{simuparam}. The relevant details of every simulation are in Table.\ref{simucond}. The results are depicted in Fig.\ref{simresults}.

The initial concentrations of particulate and dissolved nitrogen are the same for the six simulations. The initial microalgal carbon concentration varies (the initial internal nitrogen quota is therefore changing as well), and different agitation velocities are tested. The plots represent the average concentrations in the raceway (see local concentration comparison between upper and bottom layer in Fig.\ref{local}). The carbon curves (Fig.\ref{simresults}-a) show in every situation that agitation leads to better productivity. This is explained by the lack of nutrients in the upper layers at some point. However, regarding the initial internal quota (Fig.\ref{simresults}-b), the time when agitation actually enhances productivity varies. This phenomena is due to the fact that  the internal nutrient pool, if not filled enough(low quota $q$), will tend to increase further by absorbing more external nitrogen. Thus leading two main consequences: the extracellular nutrient concentration diminishes and the intracellular nitrogen increases. Since chlorophyll is positively correlated with the latter biological variable, the light can not penetrate so deep anymore. We point out that the model is not able to differentiate between several agitation velocities. Ideas to explain and overcome this limitation are suggested in section \ref{seclag}.

From these series of simulations we can conclude that our model is capable of reproducing coherent results, in the hydrodynamical part (adequate asymptotic velocity, turbulences near the wheel and laminar flow far from it) as well as in the biological concentrations. Nevertheless, we can only consider those results as preliminary, since no quantitative comparison with any data has been provided. Future work would include adaptation of the biological model given the hydrodynamical results obtained in this study. Moreover, other variables should be added to the system. For instance temperature, which could have a non negligible effect, sedimentation etc. Finally, experimental data should help us calibrate more accurately the raceway parameters and extend it to three dimensions.
\begin{table}[h!]
    \begin{center}		
      \begin{tabular}{|c|c|c|}
	\hline
	Parameter & Value & Unit \\
	\hline
	$\tilde{\mu}$ & 1.7 & $day^{-1}$ \\
	$Q_0$ & 0.050 & $gN.gC^{-1}$ \\
	$Q_l$ & 0.25 & $gN.gC^{-1}$ \\
	$K_{iI}$ & 295 & $\mu mol.m^{-2}.s^{-1}$ \\
	$K_{sI}$ & 70 & $\mu mol.m^{-2}.s^{-1}$ \\
	$\overline{\lambda}$ & 0.073 & $gN.gC^{-1}.day^{-1}$ \\
	$K_s$ & 0.0012 & $gN.m^{-3}$ \\
	R & 0.0081 & $day^{-1}$ \\
	$I_{0,max}$ & 500 & $\mu mol.m^{-2}.s^{-1}$ \\
	$\gamma(I^*)$ & 0.25 & $gChl.gN^{-1}$ \\
	a & 16.2 & $m^{2}.gChl^{-1}$ \\
	b & 0.087 & $m^{-1}$ \\         
	\hline
      \end{tabular}
    \end{center}
  \caption{Parameters for the simulations.}
  \label{simuparam}
\end{table}

\begin{table}
\begin{center}
\def\arraystretch{1.5}
\begin{tabular}{c|cccccc}

   & Simu 1 & Simu 2 & Simu 3 & Simu 4 &Simu 5 & Simu 6  \\
   \hline
 $C^{1}_{0} (g.m^{-3})$  & 25 & 25 & 50 & 50 & 83 & 83  \\
 $\left(\frac{C^{2}}{C^{1}}\right)_{0}(gN.gC^{-1})$  & 0.2 & 0.2 & 0.1 & 0.1 & 0.06 & 0.06 \\
 $C^{3}_{0}(g.m^{-3})$ & 5 & 5 & 5 & 5 & 5 & 5  \\
 Agitation & no & yes & no & yes & no & yes \\
 $C^{1}_{f} (g.m^{-3})$ & 60 & 74 & 79 & 103 & 100 & 129 \\
 $\left(\frac{C^{2}}{C^{1}}\right)_{f}(gN.gC^{-1})$ & 0.115 & 0.120 & 0.110 & 0.085 & 0.087 & 0.065 \\
 $C^{3}_{f}(g.m^{-3})$ & 3.77 & 0.0 & 0.0 & 0.0 & 0.0 & 0.0 
\end{tabular}
\caption{Initial conditions and final concentrations for the set of 6 simulations we performed. The differences lie on the one hand on the fact that the water is agitated or not and on the other hand on the initial carbon concentration, which changes the internal quota (nitrogen concentration is always equal to $5 ((g.m^{-3})$)}
\label{simucond}
\end{center}
\end{table}

 \begin{figure}[h!]
\begin{center}
\includegraphics[scale = 0.30]{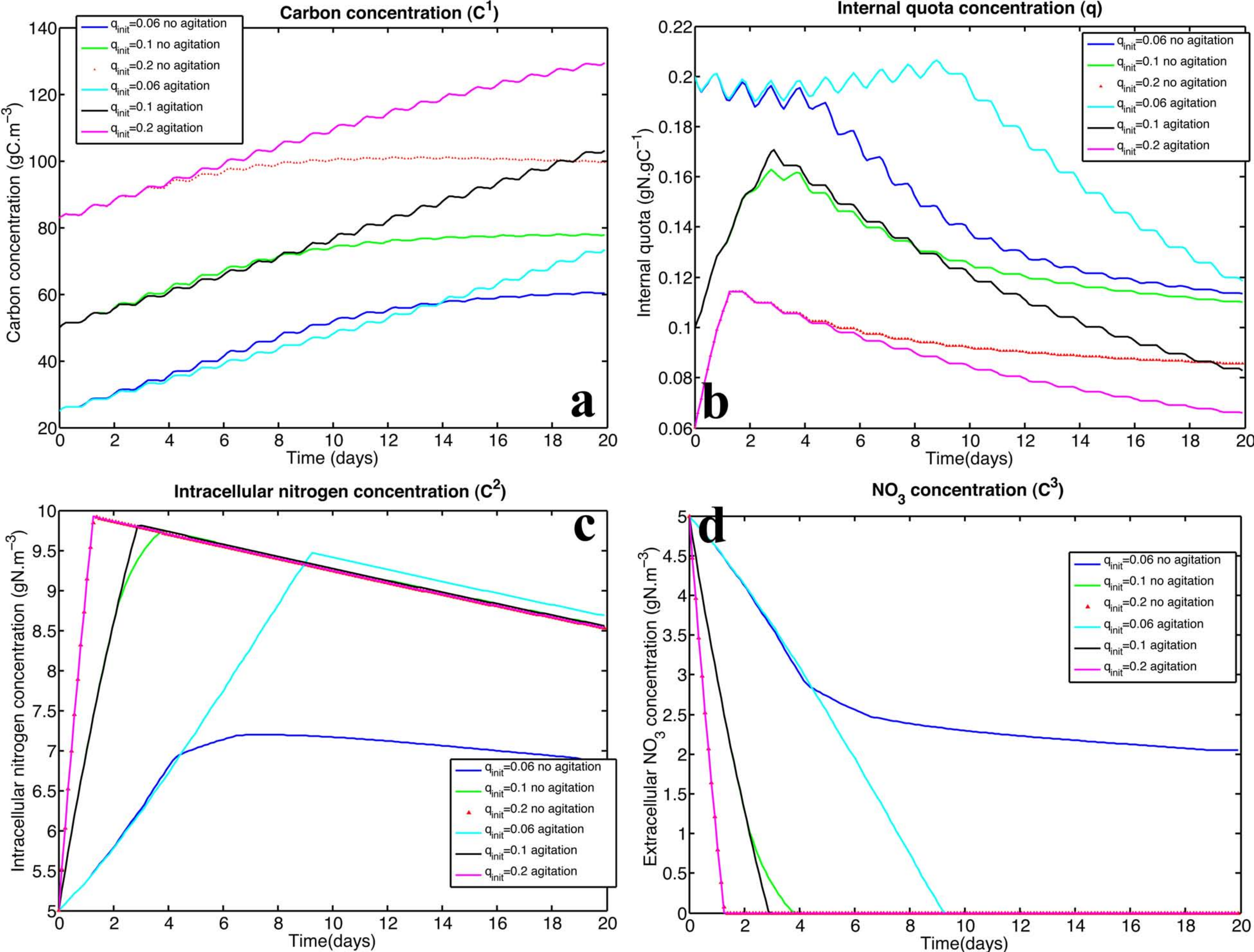}  
\caption{(a): Carbon concentration; (b): Internal quota $q$; (c): nitrogen concentration; (d): substrate concentration($NO_{3}$). Those plots illustrate the average concentrations in the raceway for 6 simulations. Three were carried out without agitation, and the other three had the agitation term. In each situation, agitation, leading to homogenization leads to a better productivity. However, for certain initial conditions, the improvement is quite slow (after several days), since the biological variables do not evolve as quickly as hydrodynamics does.}
\label{simresults}
\end{center}
\end{figure}

 \begin{figure}[h!]
\begin{center}
\includegraphics[scale = 0.3]{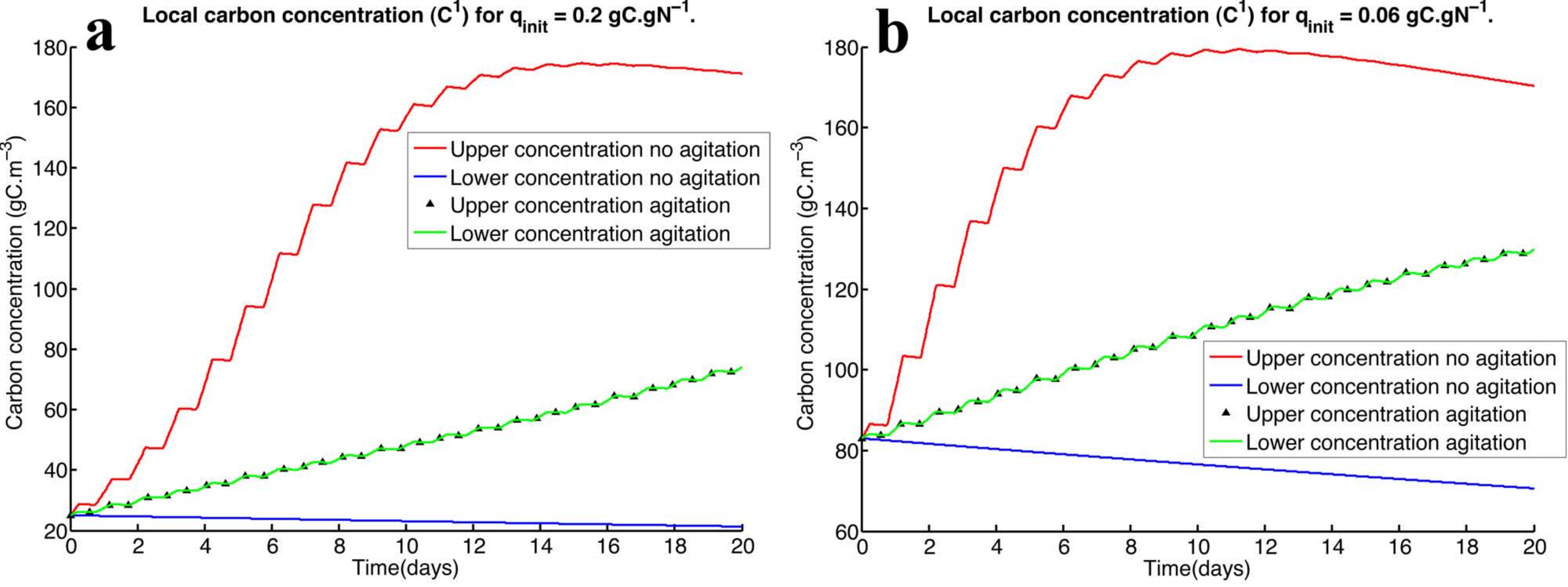}  
\caption{(a): Local carbon concentration when $q_{init} = 0.2 gN.gC^{-1}$, in the bottom and upper layer, with and without agitation. (b): Local carbon concentration when $q_{init} = 0.06 gN.gC^{-1}$, in the bottom and upper layer, with and without agitation. We see clearly the homogenization due to agitation. In both cases, we also see that the bottom layer when not agitated does not vary so much, which means that we are close to the point when respiration compensates growth.}
\label{local}
\end{center}
\end{figure}

\section{Lagrangian approach}
\label{seclag}

In order to better understand the physiology of the microalgae submitted to variable light and nutrient stresses, and improve the biological models in these realistic conditions, it is crucial to study these organisms at individual scale when excited by such signals whose frequency is much faster than in natural environments such as lakes or oceans. However, it is very tricky to measure the light and nutrient signal perceived by a cell in a raceway. A Lagrangian approach derived from the previous model can lead to the reconstruction of cell trajectories. From this information, it is then possible to derive the signals that microalgal cells undergo.

In order to follow the position of a particle, we simply need to integrate the following equation. If $M(t)$ is the position of particle $M$ at time t, then we have:
\begin{equation}
		\frac{dM(t)}{dt} = v(M(t),t)
\end{equation}
where $v(x,t)$ is the eulerian velocity field at position $x$, time $t$. We do not add to those particles any Brownian motion that would refer to the diffusion of biological concentrations at the macroscopic scale. Therefore, more realistic Lagrangian trajectories should look more noisy than what we get but the general behaviour is well represented.

The global simulation in the previous section could not differentiate between several paddlewheel velocities. It is likely that the biological model is not affected, provided that the mixing is efficient. Therefore we try to understand what could be those differences. For several angular velocities, we perform a one hour simulation and build the trajectories afterwards. One hundred particles are equally distributed in the pool along the depth dimension. Fig.\ref{bar} shows the distribution of particles against the percentage of time spent under more than 50\% of the incident light (we will call it high enlightenment). We notice that the wheel velocity has an influence on the number of particles which never undergo high enlightenment (38 particles over 100 for $\omega = 0.5$ and only 13 particles over 100 for $\omega = 1.0$). But globally, the particles are distributed around 20\% of their time under high enlightenment. This is what we can expect from a good mixing since the light is exponentially decreasing and getting 50\% of the incident light means being in the first 10cm from the surface of the pool (over 50cm).

 \begin{figure}[h!]
\begin{center}
\includegraphics[scale = 0.6]{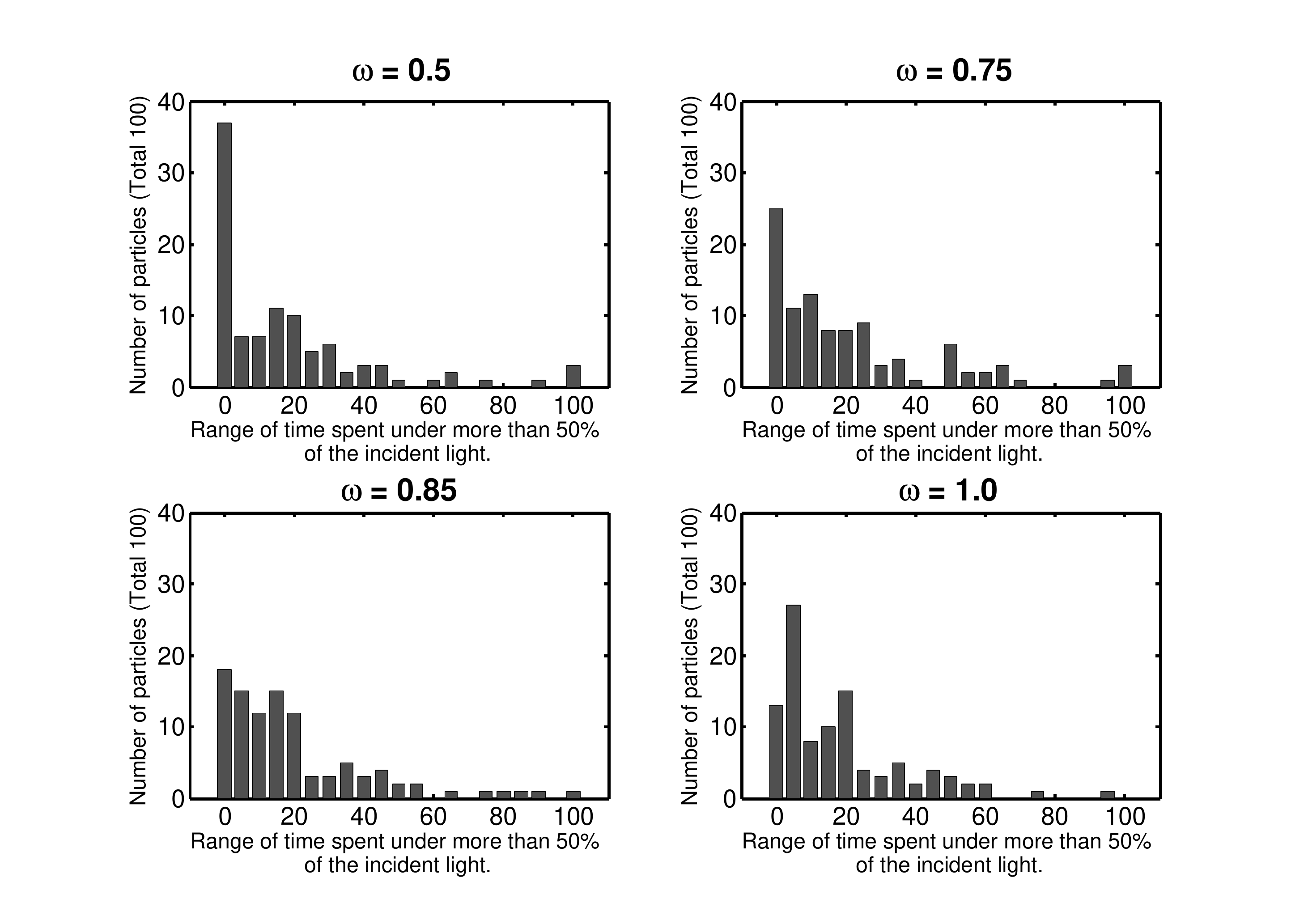}  
\caption{Number of particles for each class of enlightenment for four wheel angular velocities(0.5, 0.75, 0.85 and 1.0 $rad.s^{-1}$). One class represents a range of 5\%. Being for instance in the class 20\%-25\% means that the particle spent between 20 and 25\% of her time under hight enlightenment (more than 50\% of the incident light).}
\label{bar}
\end{center}
\end{figure}

In Fig.\ref{omegacurves} we plot different indicators for eight velocities, established during a one hour simulation. First of all, Fig.\ref{omegacurves}.a illustrates the average velocity at the end of the simulation. It turns out to lie in realistic ranges (0.2 $m.s^{-1}$ to 0.7 $m.s^{-1}$). Second of all Fig.\ref{omegacurves}.b represents the average proportion of time spent for any particle under high enlightenment. From this curve we deduce that the global percentage of high enlightenment may not vary much between $\omega = 0.5$ and $\omega = 1$. The mixing is indeed well carried on (as shown in previous section) and in average, the particles have the same history. Finally, Fig.\ref{omegacurves}.c depicts the number of times the particles switch from low enlightenment to high enlightenment during one hour. This last indicator gives insights about the duration of the high enlightenment periods. The greater it is, the shorter but more numerous were the high light instants (the particles switches more often).
 \begin{figure}[h!]
\begin{center}
\includegraphics[scale = 0.3]{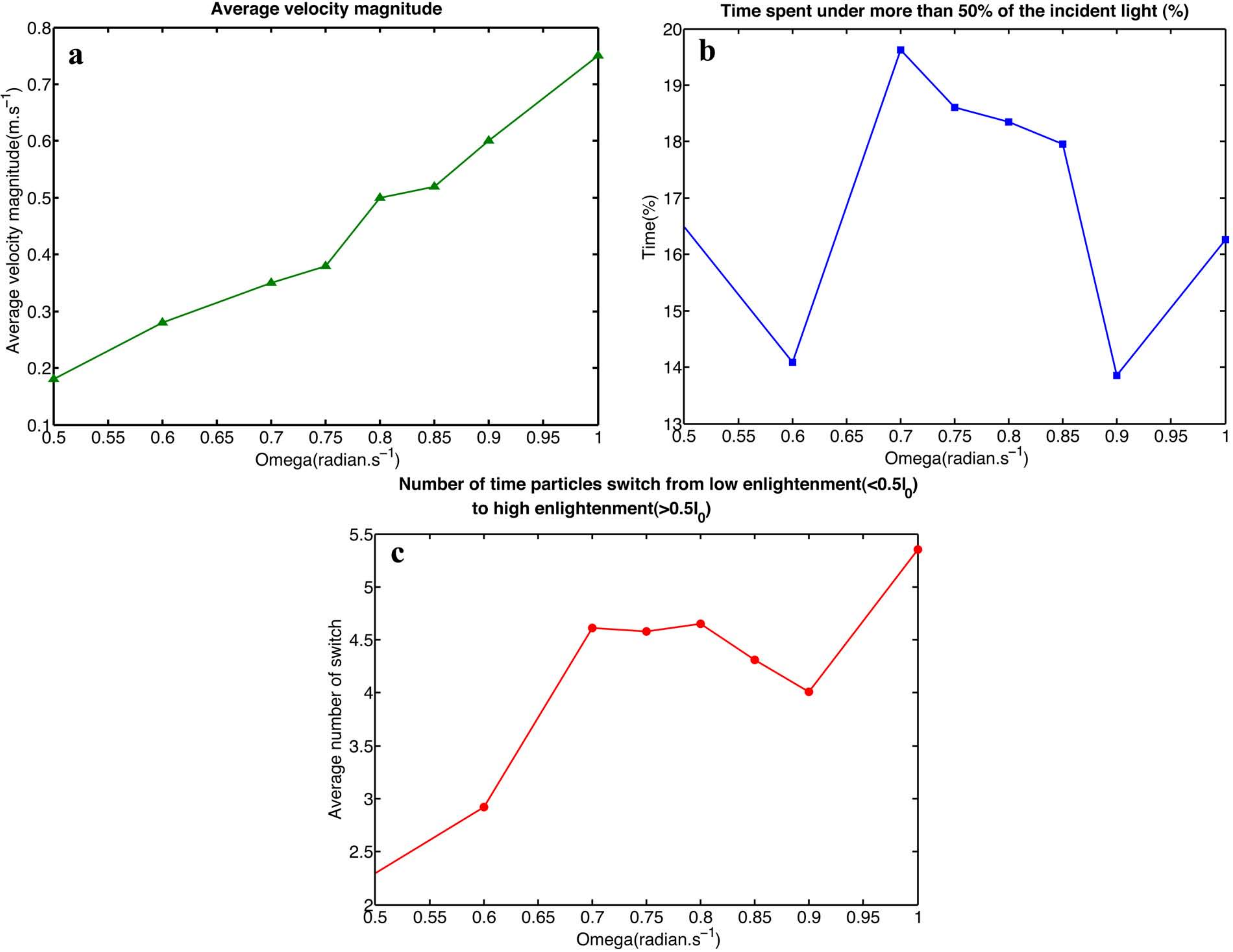}  
\caption{(a): Average velocity after one hour for 8 velocities. (b): Percentage of time spent under more than 50\% of $I_{0}$. (c): Number of times a particles switches from a situation where it perceived less than 50\% of light intensity to a situation where it perceives more during one hour.}
\label{omegacurves}
\end{center}
\end{figure}

Finally, the trajectories of three particles are depicted in Fig.\ref{tracking}.a. From those trajectories we can extract two important informations. First of all, between two passages around the paddlewheel, the particle seems to stay at constant depth, thus enforcing the fact that the flow is laminar apart from the wheel. Second of all, we clearly see that the depth of the particle is suddenly modified by the wheel, giving rise to abrupt changes in the enlightenment. Fig.\ref{tracking}.b shows the light received by those three particles in the case where the average intracellular nitrogen concentration is 5 $gN.m^{-3}$ and $\gamma(I^{*}) = 0.1$ $gChl.gN^{-1}$ (high irradiance the day before the simulation).
 \begin{figure}[h!]
\begin{center}
\includegraphics[scale = 0.3]{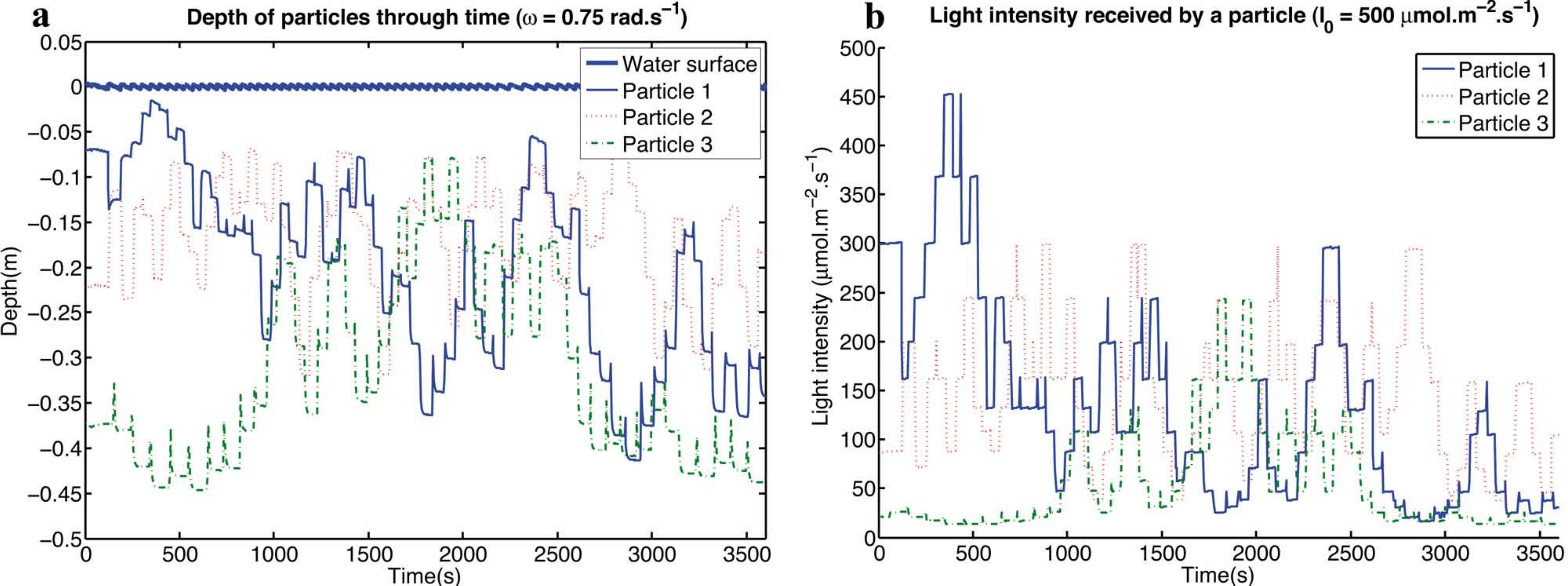}  
\caption{(a):Trajectories of three particles during the simulations. The large curve represents the water surface at the middle of the pool. The other plot is the height of a given particle through time. The algae undergo sudden changes of depth every time it meets the wheel.(b): Perceived light from the microalgae. Particles are subject to even greater irradiance changes since the light is exponentially decaying.}
\label{tracking}
\end{center}
\end{figure}

Fig.\ref{light} shows that light intensity is very low in the 20 deepest centimeters. Regarding our results on Fig.\ref{tracking}, we assume that microalgae should spend one lap at high light and then several laps at low light. Since the asymptotic velocity is in the range $0.3m.s^{-1} - 0.8m.s^{-1}$, a whole lap is done in the order of a minute. Therefore, algae are faced to light changes with a time scale in the range of ten minutes. As regards our modeling problem, this tells us that a biological model valid for these fast time scale could lead to more precise results. Additional experiments forcing microalgae with typical light signal deduced from Figure \ref{light} must therefore been carried out and support a microalgae modelling at fast time scale \cite{Esposito2009}.

\section{Conclusion}

In this paper we derived a new model coupling hydrodynamics to biology in two dimensions. It provides new insights to better understand and represent this nonlinear and non stationary complex process. The multilayer model adequately represents the high vertical heterogeneity characterizing the agitated raceway. A key point of our approach, is that we were able to provide analytical solutions which validated its numerical integration. The results show that water agitating through the paddlewheel has an effect on the growth of algae, particularly because of the lack of nutrients at the surface versus the lack of light around the pool's floor. One of the outcome of this work is the identification of realistic light signals to which microalgae are faced. Lab scale experiments will be performed to assess the impact of such high frequency light signals on microalgae. It is worth remarking that similar works have been carried out for photobioreactor \cite{Perner2002b,Perner2007a}, leading to the identification of much faster time scales in the range of the second. The comparison with experimental data would  be of great relevance to better calibrate the hydrodynamics, and later on improve the biological predictions of the model.  It is clear however that many parameters need to be taken into account in order to increase the model prediction capacity, for instance temperature. It was here considered as a passive tracer, only advected and diffused, with no particular effect on the biology (temperature does not appear in the growth or respiration rate). The sunlight effect on water temperature will be taken into account in a next stage since it deeply affects microalgae growth. Moreover, some microalgae species do not swim in the water and tend to sediment. This property could increase the beneficial effect of the wheel compared to a situation where the wheel is very slow or absent. Computational time is another issue which has to be improved in order to use the model {\it e.g.} for process optimal design. Up to now, schemes we are developing are only explicit. We are therefore constrained with a restrictive CFL condition. Improvement towards implicit schemes is also a future concern. 

\section{acknowledgement}
The authors would like to thank the support of INRIA ARC Nautilus (\url{http://www-roc.inria.fr/bang/Nautilus/?page=accueil}) together with the ANR Symbiose project (\url{http://www.anr-symbiose.org}).

\clearpage
\bibliographystyle{plain}
\bibliography{bibhydrobio2D}

\end{document}